\documentclass[11pt]{article}
	\usepackage[hmargin = 1.0in, vmargin = 1.0in]{geometry}
	\linespread{1.2}
	\markboth{IEEE Transactions on Signal Processing, vol. 65, no. 17, pp. 4481-4494, September 2017 (Preprint)}{}

\usepackage{graphicx}
\usepackage{amssymb}
\usepackage{mathtools}
\usepackage{amsmath}
\usepackage{amsthm}
\usepackage{esdiff}
\usepackage{cite}
\usepackage{booktabs}







\DeclareMathOperator{\soft}{soft}
\DeclareMathOperator{\firm}{firm}
\DeclareMathOperator{\sign}{sign}

 \newcommand{\diag}{\mathrm{diag}}



\DeclarePairedDelimiter{\norm}{\lVert}{\rVert}		

\DeclarePairedDelimiter{\abs}{\lvert}{\rvert}		

\newcommand{\inv}{^{-1}}
\newcommand{\myJ}{ \mathrm{j} }

\newcommand{\lam}{{\lambda} }
\newcommand{\half}{\frac{1}{2}}

\newcommand{\iter}[1]{ ^{(#1)} }

\newcommand{\tr}{^{\mathsf{T}}}			
\newcommand{\ct}{^{\mathsf{H}}}			





\newcommand{\RR}{\mathbb{R}}

\newcommand{\CC}{\mathbb{C}}

\renewcommand{\leq}{\leqslant}

\renewcommand{\le}{\leqslant}
\renewcommand{\ge}{\geqslant}

\newcommand{\mge}{\succcurlyeq}		
\newcommand{\mle}{\preccurlyeq}		


\newcommand{\ia}{({\it i\/})}
\newcommand{\ib}{({\it ii\/})}

\newcommand{\infconv}{\mbox{\footnotesize$\;\square\;$}}

\newcommand{\nulls}{\operatorname{null}}
\newcommand{\ranges}{\operatorname{range}}
\newcommand{\compose}{\mathbin{\circ}}
\newcommand{\adot}{ {\,\cdot\,} }    			
\newcommand{\RX}{ \RR \cup\{ \pinf \}  }

\newcommand{\thalf}{\tfrac{1}{2}}
\newcommand{\me}{^{\mathsf{M}}}			
\newcommand{\pinv}{^{\mathsf{+}}}			
\newcommand{\pitr}{^{\mathsf{+T}}}			

\newcommand{\pinf}{\ensuremath{+\infty}}
\providecommand{\Gz}{\Gamma_0}

\newcommand{\opt}{^{\mathsf{opt}}}			

\newtheorem{defn}{Definition}
\newtheorem{prop}{Proposition}
\newtheorem{lemma}{Lemma}
\newtheorem{theorem}{Theorem}
\newtheorem{corr}{Corollary}
\newtheorem{example}{Example}


\title{Sparse Regularization via Convex Analysis}
\renewcommand\footnotemark{}

\author{Ivan Selesnick%
	\thanks{
	Department of Electrical and Computer Engineering, 
	Tandon School of Engineering, 
	New York University, 
	New York, 
	USA.
	Email: selesi@nyu.edu.
	This work was supported by
	NSF under grant~CCF-1525398
	and
	ONR under grant~N00014-15-1-2314.
	Software is available online:
	\newline
	\hspace*{2em}
	{https://codeocean.com/2017/06/21/gmc-sparse-regularization}  (Matlab)
	\newline
	\hspace*{2em}
	{https://codeocean.com/2017/06/22/gmc-sparse-regularization}  (Python)
	\newline
	\hspace*{2em}
	{http://ieeexplore.ieee.org/document/7938377/media}  (Matlab)
	\newline
	\newline
I. Selesnick, ``Sparse Regularization via Convex Analysis'' in \emph{IEEE Transactions on Signal Processing}, vol.~65, no.~17, pp. 4481-4494, Sept. 2017.
doi: 10.1109/TSP.2017.2711501	}%
}

\date{}


\begin{document}
\maketitle

\begin{abstract}

Sparse approximate solutions to linear equations are classically obtained
via L1 norm regularized least squares, 
but this method often underestimates the true solution. 
As an alternative to the L1 norm, 
this paper proposes a class of non-convex penalty functions
that maintain the convexity of the least squares cost function to be minimized,
and avoids the systematic underestimation characteristic of L1 norm regularization. 
The proposed penalty function is a multivariate generalization of the minimax-concave (MC) penalty.
It is defined in terms of a new multivariate generalization of the Huber function,
which in turn is defined
via
  infimal convolution. 
The proposed sparse-regularized least squares cost function 
can be minimized by proximal algorithms comprising simple computations. 
\end{abstract}

\section{Introduction}

Numerous signal and image processing techniques 
build upon sparse approximation \cite{Starck_2015_book}.
A sparse approximate solution to a system of linear equations ($ y = A x $)
can often be obtained via convex optimization.
The usual technique is to minimize the regularized linear least squares cost function
$ J \colon \RR^N \to \RR $,
\begin{equation}
	\label{eq:bpd}
	J(x) = \half \norm{ y - A x }_2^2 + \lam \norm{ x }_1,
	\quad \lam > 0.
\end{equation}
The $ \ell_1 $ norm is classically used as a regularizer here, since
among convex regularizers it induces sparsity most effectively \cite{Bruckstein_2009_SIAM}.
But this formulation tends to underestimate high-amplitude components 
of $ x \in \RR^N $.
Non-convex sparsity-inducing regularizers are also widely used
(leading to more accurate estimation of high-amplitude components),
but then the cost function is generally non-convex
and has extraneous suboptimal local minimizers \cite{Nikolova_2011_chap}. 

This paper proposes a class of non-convex penalties
for sparse-regularized linear least squares
that generalizes the $ \ell_1 $ norm
and maintains the convexity of the least squares cost function to be minimized.
That is, we consider the cost function $ F \colon \RR^N \to \RR $ 
\begin{equation}
	F(x) = \half \norm{ y - A x }_2^2 + \lam \, \psi_B(x), 
	\quad
	\lam > 0
\end{equation}
and we propose a new non-convex penalty $ \psi_B \colon \RR^N \to \RR $ 
that makes $ F $ convex.  
The penalty $ \psi_B $ is parameterized by a matrix $ B $, 
and 
the convexity of $ F $ depends on $ B $ being suitably prescribed. 
In fact, the choice of $ B $ will depend on $ A $.

The matrix (linear operator) $ A $ may be arbitrary (i.e., injective, surjective, both, or neither).
In contrast to the $ \ell_1 $ norm, the new approach does not systematically underestimate
high-amplitude components of sparse vectors.
Since the proposed formulation is convex, the
cost function has no suboptimal local minimizers.

The new class of non-convex penalties is defined using tools of convex analysis.
In particular, \emph{infimal convolution} is used to define a new multivariate generalization of the Huber function. 
In turn, the generalized Huber function is used to define the proposed non-convex penalty,
which can be considered a multivariate generalization of the minimax-concave (MC) penalty.
Even though the generalized MC (GMC) penalty is non-convex,
it is easy to prescribe this penalty so as to maintain the convexity of the cost function to be minimized.

The proposed convex cost functions 
can be minimized using proximal algorithms, comprising simple computations.
In particular, the minimization problem can be cast as a kind of saddle-point problem
for which the forward-backward splitting algorithm is applicable. 
The main computational steps of the algorithm are the operators $ A $, $ A\tr $, and soft thresholding.
The implementation is thus `matrix-free' in that it involves the operators $ A $ and $ A\tr $,
but does not access or modify the entries of $ A $.
Hence, 
the algorithm
can leverage efficient implementations of $ A $ and its transpose.

We remark that while the proposed GMC penalty is non-separable,
we do not advocate non-separability in and of itself as a desirable property of a sparsity-inducing penalty. 
But in many cases (depending on $ A $),
non-separability is simply a requirement of a non-convex penalty designed so as to maintain convexity of the cost function $ F $ to be minimized.
If $ A\tr \! A $ is singular
(and none of its eigenvectors are standard basis vectors), then a separable penalty that maintains the convexity of the cost function $ F $ must, in fact, be a convex penalty \cite{Selesnick_2016_TSP_BISR}. 
This leads us back to the $ \ell_1 $ norm.
Thus, to improve upon the $ \ell_1 $ norm, the penalty must be non-separable.

This paper is organized as follows.
Section \ref{sec:notation} sets notation and recalls definitions of convex analysis. 
Section \ref{sec:scalar} 
recalls the (scalar) Huber function, the (scalar) MC penalty, and how they arise
in the formulation of threshold functions (instances of proximity operators).
The subsequent sections generalize these concepts to the multivariate case.
In Section \ref{sec:huber}, we define a multivariate version of the Huber function.
In Section \ref{sec:gmc}, we define a multivariate version of the MC penalty.
In Section \ref{sec:spreg}, we show how to set the GMC penalty to maintain
convexity of the least squares cost function.
Section \ref{sec:alg} presents a proximal algorithm to minimize this
type of cost function.
Section \ref{sec:examples} presents examples wherein the GMC penalty is used
for signal denoising and approximation.

Elements of this work were presented in Ref.~\cite{Selesnick_2017_ICASSP_GMC}.

\subsection{Related work}
\label{sec:prior}

Many prior works have proposed non-convex penalties that strongly promote sparsity 
or describe 
algorithms for solving the sparse-regularized linear least squares problem,
e.g., \cite{Castella_2015_camsap, Candes_2008_JFAP, Nikolova_2011_chap, Mohimani_2009_TSP, Marnissi_2013_ICIP_shortercite, Chartrand_2014_ICASSP, Fan_2001_JASA, Gholami_2011_TSP, Chouzenoux_2013_SIAM, Portilla_2007_SPIE, Zou_2008_AS, Chen_2014_TSP_Convergence, Gasso_2009_TSP,Soussen_2015_TSP, Woodworth_2016_InvProb}.
However, most of these papers
\ia~use separable (additive) penalties
or
\ib~do not seek to maintain convexity of the cost function.
Non-separable non-convex penalties are proposed in Refs.~\cite{Tipping_2001_JMLR, Wipf_2011_tinfo},
but they are not designed to maintain cost function convexity.
The development of convexity-preserving non-convex penalties
was pioneered by Blake, Zisserman, and Nikolova \cite{Blake_1987, Nikolova_1998_ICIP, Nikolova_2010_TIP, Nikolova_2011_chap, Nikolova_1999_TIP}, 
and further developed in 
\cite{Bayram_2016_TSP, Chen_2014_TSP_ncogs, Ding_2015_SPL, He_2016_MSSP, Lanza_2016_JMIV, MalekMohammadi_2016_TSP, Parekh_2016_SPL_ELMA, Selesnick_2014_TSP_MSC, Selesnick_SPL_2015}.
But these are separable penalties, and as such they are fundamentally limited. 
Specifically, 
if $ A\tr \! A $ is singular, 
then a separable penalty constrained to maintain cost function convexity can only improve on the $ \ell_1 $ norm to a very limited extent \cite{Selesnick_2016_TSP_BISR}.
Non-convex regularization that maintains cost function convexity was used in \cite{Lanza_2016_NM}
in an iterative manner for non-convex optimization,
to reduce the likelihood that an algorithm converges to suboptimal local minima.

To overcome the fundamental limitation of separable non-convex penalties,
we proposed a bivariate non-separable non-convex penalty 
that maintains the convexity of the cost function to be minimized \cite{Selesnick_2016_TSP_BISR}.
But that penalty is useful for only a narrow class of linear inverse problems.
To handle more general problems, 
we subsequently proposed a multivariate penalty formed by subtracting from the $ \ell_1 $ norm
a function comprising the composition of a linear operator and a separable nonlinear function \cite{Selesnick_2017_TSP_MUSR}.
Technically, this type of multivariate penalty is non-separable, but it still
constitutes a rather narrow class of non-separable functions.

Convex analysis tools (especially the Moreau envelope and the Fenchel conjugate) 
have recently been used 
in novel ways 
for sparse regularized least squares
 \cite{Carlsson_2016_arxiv, Soubies_2015_SIAM_CEL0}.
Among other aims, these papers seek the convex envelope of the $ \ell_0 $ pseudo-norm
regularized least squares cost function,
and derive alternate cost functions that share the same global minimizers but have fewer local minima. 
In these approaches,
algorithms are less likely to converge to suboptimal local minima
(the global minimizer might still be difficult to calculate). 

For the special case where $ A\tr \! A $ is diagonal,
the proposed GMC penalty is closely related to
the `continuous exact $ \ell_0 $' (CEL0) penalty
introduced in \cite{Soubies_2015_SIAM_CEL0}.
In \cite{Soubies_2015_SIAM_CEL0} it is observed that 
if $ A\tr \! A $ is diagonal, then the global minimizers of the $ \ell_0 $ regularized problem 
coincides with that of a convex function 
defined using the CEL0 penalty. 
Although the diagonal case is simpler than the non-diagonal case (a non-convex penalty can be readily constructed to maintain cost function convexity \cite{Selesnick_2014_TSP_MSC}),
the connection to the $ \ell_0 $ problem is enlightening. 

In other related work,
we use convex analysis concepts (specifically, the Moreau envelope)
for the problem of total variation (TV) denoising \cite{Selesnick_2017_SPL_MTVD}.
In particular, 
we prescribe a non-convex TV penalty that preserves the convexity of
the TV denoising cost function to be minimized.
The approach of Ref.~\cite{Selesnick_2017_SPL_MTVD}
generalizes standard TV denoising so as to 
more accurately estimate jump discontinuities.

\section{Notation}
\label{sec:notation}

The $ \ell_1 $, $ \ell_2 $, and $ \ell_{\infty} $ norms of $ x \in \RR^N $ are defined
$ \norm{ x }_1 = \sum_n \abs{ x_n } $,
$ \norm{ x }_2 = \bigl( \sum_n \abs{ x_n }^2 \bigr)^{1/2} $,
and
$ \norm{ x }_\infty = \max_n \abs{ x_n } $.
If $ A \in \RR^{M \times N} $, then component $ n $ of $ A x $ is denoted $ [A x]_n $.
If the matrix $ A - B $ is positive semidefinite, we write $ B \mle A $. 
The matrix 2-norm of matrix $ A $ is denoted $ \norm{ A }_2 $
and its value is the square root of the maximum eigenvalue of $ A\tr\! A $.
We have $ \norm{ A x }_2 \le \norm{A }_2 \norm{ x }_2 $ for all $ x \in \RR^N $.
If $ A $ has full row-rank (i.e., $ A A\tr $ is invertible), 
then the pseudo-inverse of $ A $ is given by $ A\pinv := A\tr (A A\tr)\inv $.
We denote the transpose of the pseudo-inverse of $ A $ as $ A\pitr $, i.e., $ A\pitr := ( A\pinv )\tr $.
If $ A $ has full row-rank, then $ A\pitr = (A A\tr)\inv A $.

This work uses definitions and notation of convex analysis \cite{Bauschke_2011}.
The infimal convolution of two functions $ f $ and $ g $ from $ \RR^N $ to $ \RX $ is given by
\begin{equation}
	(f \infconv g)(x)
	=
	\inf_{ v \in \RR^N }
	\bigl\{
		f( v ) + g( x - v )
	\big\}.
\end{equation}
The Moreau envelope of the function $ f \colon \RR^N \to \RR $ is given by 
\begin{equation}
	f\me(x) = \inf_{ v \in \RR^N }
	\bigl\{
		f(v) + \thalf \norm{ x - v }_2^2 
	\bigr\}.
\end{equation}
In the notation of infimal convolution, we have
\begin{equation}
	f\me = f \infconv \thalf \norm{ \adot }_2^2 .
\end{equation}
The set of proper lower semicontinuous (lsc) convex functions from $ \RR^N $ to $ \RX $ is  denoted $ \Gz(\RR^N) $.

If the function $ f $ is defined as the composition $ f(x) = h(g(x)) $, then we write $ f = h \compose g $.

The soft threshold function $ \soft \colon \RR \to \RR $ 
with threshold parameter $ \lam \ge 0 $ is defined as
\begin{equation}
	\soft(y; \lam) :=
	\\
	\begin{cases}
		0, \quad & \abs{ y } \le \lam
		\\
		( \abs{ y } - \lam ) \sign(y), \quad &  \abs{ y } \ge  \lam.
	\end{cases}
\end{equation}

\begin{figure}[t]
	\centering
	\includegraphics{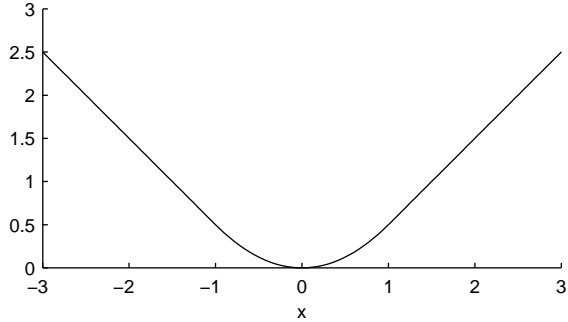}
	\caption{
		The Huber function.
	}
	\label{fig:scalar_huber}
\end{figure}

\section{Scalar Penalties}
\label{sec:scalar}

We recall the definition of the Huber function \cite{Huber_1964}.

\begin{defn}
\label{def:huber}
The Huber function
$ s \colon \RR \to \RR $
is defined as
\begin{equation}
	\label{eq:defss}
	s(x) := 
	\begin{cases}
		\thalf x^2, \quad & \abs{ x } \le 1 
		\\
		\abs{x} - \half, & \abs{ x } \ge 1,
	\end{cases}
\end{equation}
as illustrated in Fig.~\ref{fig:scalar_huber}.
\end{defn}

\begin{prop}
The Huber function can be written as
\begin{equation}
	\label{eq:sica}
	s(x) = \min_{ v \in \RR } \{ \abs{ v } + \thalf (x - v)^2 \}.
\end{equation}
In the notation of infimal convolution, we have equivalently
\begin{equation}
	\label{eq:sic}
	s = \abs{ \adot } \infconv \thalf ( \adot )^2 .
\end{equation}
And in the notation of the Moreau envelope, we have equivalently
$ s = \abs{ \adot } \me $.
\end{prop}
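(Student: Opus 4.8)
The plan is to compute the minimization on the right-hand side of \eqref{eq:sica} directly and check that the result agrees branch-by-branch with the piecewise definition \eqref{eq:defss}. Fix $x \in \RR$ and write $g(v) := \abs{v} + \thalf(x - v)^2$. Because $g$ is the sum of the convex function $\abs{\adot}$ and a strictly convex coercive quadratic, it is itself strictly convex and coercive; hence it has a unique minimizer and the infimum is attained. This already legitimizes writing $\min$ in place of $\inf$ in \eqref{eq:sica}.

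Next I would identify the minimizer $v^*$ through the first-order optimality condition $0 \in \partial g(v^*)$, equivalently $x - v^* \in \partial \abs{\adot}(v^*)$, using that the subdifferential of the absolute value is $\{\sign(v)\}$ for $v \neq 0$ and the interval $[-1, 1]$ at $v = 0$. Solving this inclusion yields the soft-threshold rule $v^* = \soft(x; 1)$: one finds $v^* = x - 1$ for $x \ge 1$, $v^* = x + 1$ for $x \le -1$, and $v^* = 0$ for $\abs{x} \le 1$. (Equivalently, one can split into the regions $v > 0$, $v < 0$, $v = 0$, minimize the corresponding smooth quadratic in each, and retain the solution whose sign is consistent.) Substituting $v^*$ back into $g$ completes the calculation: for $\abs{x} \le 1$ we get $g(0) = \thalf x^2$, whereas in both nonzero cases $(x - v^*)^2 = 1$ and $\abs{v^*} = \abs{x} - 1$, so $g(v^*) = (\abs{x} - 1) + \thalf = \abs{x} - \half$. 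These are precisely the two branches of $s(x)$ in \eqref{eq:defss}.

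The remaining two formulations are then immediate. Identity \eqref{eq:sic} is just \eqref{eq:sica} rewritten using the infimal-convolution notation of Section \ref{sec:notation}, and since $\thalf (\adot)^2 = \thalf \norm{\adot}_2^2$ in dimension one, this infimal convolution is by definition the Moreau envelope $\abs{\adot}\me$. The only delicate point in the whole argument is the nonsmoothness of $\abs{\adot}$ at the origin; this is exactly why I would invoke the subdifferential optimality condition (or the explicit sign-based case split) rather than differentiate naively, which would miss the flat region $\abs{x} \le 1$ where the minimizer sticks at $v^* = 0$.
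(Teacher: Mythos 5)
Your proof is correct, and it supplies a complete argument where the paper in fact gives none: the paper treats \eqref{eq:sica} as a standard fact about the Moreau envelope, citing Sec.~3.1 of Parikh--Boyd and Combettes, and only remarks (without derivation) that the minimum is attained at one of $ v \in \{ 0, \, x-1, \, x+1 \} $, which yields the three-function formula \eqref{eq:smmm}. Your subdifferential computation is exactly the missing justification of that remark: solving $ x - v^* \in \partial \abs{ \adot } (v^*) $ gives $ v^* = \soft(x; 1) $, whose three output regimes ($ 0 $ for $ \abs{x} \le 1 $, $ x - 1 $ for $ x \ge 1 $, $ x + 1 $ for $ x \le -1 $) are precisely the paper's three candidate points, and your back-substitution correctly recovers both branches of \eqref{eq:defss}. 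Two smaller points are also handled properly: the strict convexity and coercivity of $ v \mapsto \abs{v} + \thalf (x-v)^2 $ licenses writing $ \min $ in place of $ \inf $ (the exactness that Proposition~\ref{prop:lsc} later establishes in the multivariate case via \cite{Bauschke_2011}), and the identification of the one-dimensional infimal convolution $ \abs{ \adot } \infconv \thalf ( \adot )^2 $ with the Moreau envelope $ \abs{ \adot } \me $ is indeed immediate from the definitions in Section~\ref{sec:notation}. No gaps.
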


The Huber function is a standard example of the Moreau envelope.
For example,
see
Sec.~3.1 of Ref.~\cite{Parikh_2014_FTO}
and
\cite{Combettes_2017_SVVA}.
We note here that, given $ x \in \RR $, 
the minimum in \eqref{eq:sica} is achieved for $ v $ equal to $ 0 $, $ x - 1 $, or $ x + 1 $, 
i.e.,
\begin{equation}
	s(x) = \min_{ v \in \{ 0 , \, x - 1,  \, x + 1 \} }
	\{ 
		\abs{ v } + \thalf (x - v)^2
	\}.
\end{equation}
Consequently, the Huber function can be expressed as
\begin{equation}
	\label{eq:smmm}
	s(x) = \min \bigl\{
		\thalf x^2, \ \abs{ x - 1} + \thalf, \ \abs{ x + 1} + \thalf
	\bigr\}
\end{equation}
as illustrated in Fig.~\ref{fig:scalar_huber_min}.

\begin{figure}[t]
	\centering
	\includegraphics{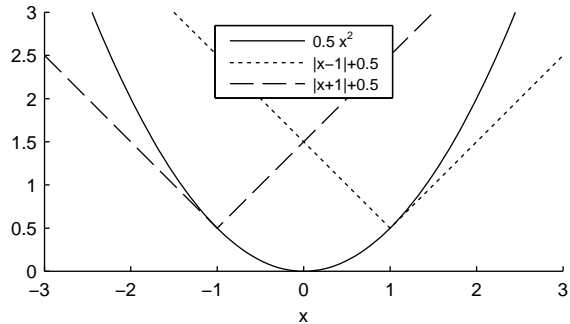}
	\caption{
		The Huber function as the pointwise minimum of three functions.
	}
	\label{fig:scalar_huber_min}
\end{figure}

\bigskip

\noindent
We now consider the scalar penalty function illustrated in Fig.~\ref{fig:mcpenalty}.
This is the minimax-concave (MC) penalty \cite{Zhang_2010_AnnalsStat};
see also \cite{Fornasier_2008_ACHA, Bayram_2016_TSP, Bayram_2015_SPL}.

 \begin{defn}
\label{def:mcpen}
The minimax-concave (MC) penalty function
$ \phi \colon \RR \to \RR $
is defined as
\begin{equation}
	\label{eq:defMC}
	\phi( x ) := 
	\begin{cases}
		\abs{ x } - \frac{ 1 }{ 2 } x^2, \  & \abs{ x } \le 1
		\\
		\frac{ 1 }{ 2 }, & \abs{ x } \ge 1,
	\end{cases}	
\end{equation}
as illustrated in Fig.~\ref{fig:mcpenalty}.
\end{defn}
The MC penalty can be expressed as
\begin{equation}
	\label{eq:absms}
	\phi( x ) = \abs{ x } - s( x ) 
\end{equation}
where $ s $ is the Huber function.
This representation of the MC penalty
will be used in Sec.~\ref{sec:gmc} to generalize the MC penalty to the multivariate case.

\begin{figure}
	\centering
	\includegraphics{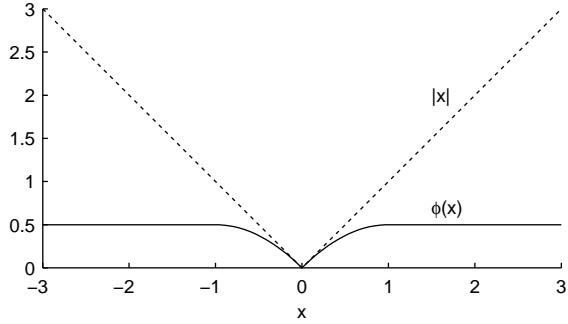}
	\caption{
		The MC penalty function.
	}
	\label{fig:mcpenalty}
\end{figure}

\subsection{Scaled functions}

It will be convenient to define scaled versions of the Huber function and MC penalty.

\begin{defn}
\label{def:scaled_huber}
Let $ b \in \RR $.
The scaled Huber function
$ s_{ b } \colon \RR \to \RR $
is defined as
\begin{equation}
	\label{eq:defsss}
	s_{ b }( x ) 
	:= s( { b }^2 x ) / { b }^2,
	\ \
	b \neq 0.
\end{equation}
For $ b = 0 $, the function is defined as
\begin{equation}
	s_0(x) := 0.
\end{equation}
\end{defn}

Hence, for $ b \neq 0 $, the scaled Huber function is given by
\begin{equation}
	s_{ b }( x ) 
	= 
	\begin{cases}
		\half { b }^2 x^2, \quad & \abs{ x } \le 1/{ b }^2 
		\\
		\abs{x} - \frac{ 1 }{ 2 { b }^2 }, & \abs{ x } \ge 1/{ b }^2.
	\end{cases}
\end{equation}
The scaled Huber function $ s_b $
is shown in Fig.~\ref{fig:scaled}
for several values of the scaling parameter $ b $.
Note that
\begin{equation}
	0 \le s_b(x) \le \abs{ x },
	\quad
	\forall x \in \RR,
\end{equation}
and
\begin{align}
	\lim_{ { b } \to \infty }
	 s_{ b }( x ) & = \abs{ x }  
	 \\
	\lim_{ { b } \to 0 }
	 s_{ b }(x) & = 0.
\end{align}
Incidentally, we use $ b^2 $ in definition \eqref{eq:defsss} rather than $ b $,
so as to parallel the generalized Huber function to be defined in Sec.~\ref{sec:huber}.

\begin{prop}
Let $ b \in \RR $.
The scaled Huber function can be written as
\begin{equation}
	\label{eq:saic}
	s_{ b }(x)
	= 
	\min_{ v \in \RR } \{ \abs{ v } + \thalf { b }^2 (x - v)^2 \} .
\end{equation}
In terms of infimal convolution, we have equivalently
\begin{equation}
	s_{ b } = \abs{ \adot } \infconv \thalf { b }^2 ( \adot )^2.
\end{equation}
\end{prop}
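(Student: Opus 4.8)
The plan is to reduce the claim to the variational representation \eqref{eq:sica} of the unscaled Huber function $s$, which is already established, by applying the change of variables implicit in the scaling of Definition~\ref{def:scaled_huber}. I would split into the cases $b \neq 0$ and $b = 0$.

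For $b \neq 0$, I would start from the definition $s_b(x) = s(b^2 x)/b^2$ and substitute \eqref{eq:sica} evaluated at the point $b^2 x$, obtaining $s_b(x) = b^{-2}\min_{w \in \RR}\{\abs{w} + \thalf(b^2 x - w)^2\}$. The key step is then the substitution $w = b^2 v$: because $b^2 > 0$, the map $v \mapsto b^2 v$ is a bijection of $\RR$, so minimizing over $w \in \RR$ is the same as minimizing over $v \in \RR$. After this substitution the bracketed expression becomes $b^2\abs{v} + \thalf b^4 (x - v)^2$, and dividing by $b^2$ yields exactly $\min_{v}\{\abs{v} + \thalf b^2(x-v)^2\}$, which is \eqref{eq:saic}.

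For $b = 0$, the right-hand side of \eqref{eq:saic} reduces to $\min_{v}\abs{v} = 0$, attained at $v = 0$, which matches the definition $s_0(x) = 0$; this closes the degenerate case. Finally, to pass to the infimal-convolution form I would note that for $b \neq 0$ the objective $v \mapsto \abs{v} + \thalf b^2(x-v)^2$ is continuous and coercive, so the infimum is attained and the use of ``$\min$'' is legitimate; recognizing $\abs{v}$ as $\abs{\adot}$ evaluated at $v$ and $\thalf b^2(x-v)^2$ as $\thalf b^2(\adot)^2$ evaluated at $x - v$ identifies the minimization as the infimal convolution $\abs{\adot}\infconv\thalf b^2(\adot)^2$.

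I do not expect any genuine obstacle here: the argument is essentially a one-line change of variables. The only points requiring care are verifying that the substitution $w = b^2 v$ is a bijection (so the infimum is taken over the same set), and separately checking that the formula degenerates correctly at $b = 0$, since there the defining scaling $s(b^2 x)/b^2$ is undefined and must be handled through the direct definition $s_0 = 0$.
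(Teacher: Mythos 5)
Your proof is correct and takes essentially the same approach as the paper: the change of variables $w = b^2 v$ in the variational formula for $s(b^2 x)$ is precisely the substitution in the paper's chain of equalities, and your direct computation $\min_{v}\abs{v} = 0$ for the $b=0$ case is the same observation the paper phrases as $\abs{\adot} \infconv 0 = 0$. Your additional remarks on the bijectivity of $v \mapsto b^2 v$ and the attainment of the infimum are sound but inessential refinements of the same argument.
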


\begin{proof}
For $ b \neq 0 $, we have from \eqref{eq:defsss} that
\begin{align*}
	s_{ b }(x)
	& = \min_{ v \in \RR } \{ \abs{ v } + \thalf ({ b }^2 x - v)^2 \} / { b }^2
	\\
	& = \min_{ v \in \RR } \{ \abs{ { b }^2 v } + \thalf ({ b }^2 x - { b }^2 v)^2 \} / { b }^2
	\\
	& = \min_{ v \in \RR } \{ \abs{ v } + \thalf { b }^2 (x - v)^2 \} .
\end{align*}
It follows from $ \abs{ \adot } \infconv 0 = 0 $
that \eqref{eq:saic} holds for $ b = 0 $.
\end{proof}

\begin{defn}
\label{def:smcpen}
Let $ b \in \RR $.
The scaled MC penalty function
$ \phi_{ b } \colon \RR \to \RR $
is defined as
\begin{align}
	\label{eq:defsMC}
	\phi_{ b }( x )
	& := 
	\abs{ x } - s_{ b }( x )
\end{align}
where $ s_{ b } $ is the scaled Huber function.
\end{defn}
The scaled MC penalty $ \phi_b $ 
is shown in Fig.~\ref{fig:scaled}
for several values of $ b $.
Note that
$ \phi_0(x) = \abs{ x } $.
For $ b \neq 0 $,
\begin{equation}
	\phi_b(x) 
	=
	\begin{cases}
		\abs{ x } - \half { b }^2 x^2, \quad & \abs{ x } \le 1/{ b }^2 
		\\
		\frac{ 1 }{ 2 { b }^2 }, & \abs{ x } \ge 1/{ b }^2.
	\end{cases}	
\end{equation}

\begin{figure}
	\centering
	\includegraphics{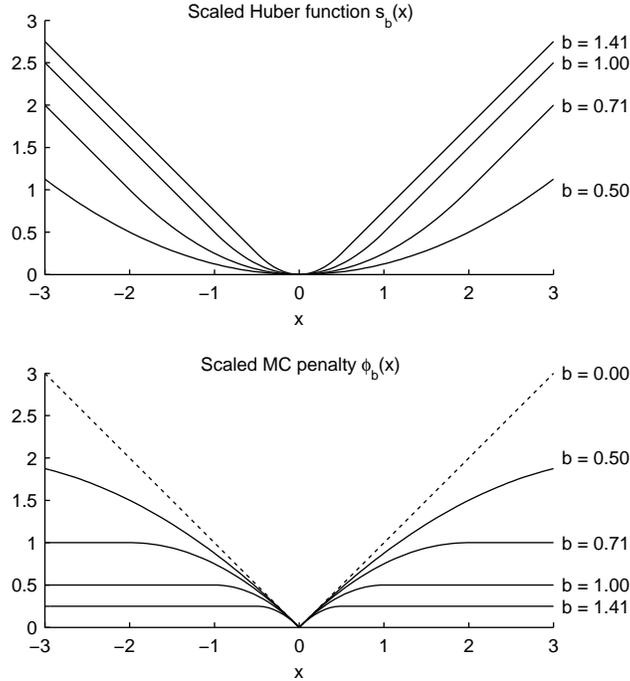}
	\caption{
		Scaled Huber function and MC penalty for several values of the scaling parameter.
	}
	\label{fig:scaled}
\end{figure}

\subsection{Convexity condition}

In the scalar case, the MC penalty corresponds to a type of threshold function.
Specifically,
the \emph{firm} threshold function is the \emph{proximity operator} of the MC penalty,
provided that a particular convexity condition is satisfied. 
Here, we give the convexity condition for the scalar case. 
We will generalize this condition to the multivariate case in Sec.~\ref{sec:spreg}.

\begin{prop}
\label{prop:sclar_convex}
Let $ \lam > 0 $ and $ a \in \RR $.
Define $ f \colon \RR \to \RR $,
\begin{equation}
	\label{eq:scprop_f}
	f(x) = \half (y - a x)^2 + \lam \, \phi_{ b }( x ) 
\end{equation}
where $ \phi_b $ is the scaled MC penalty \eqref{eq:defsMC}.
If
\begin{equation}
	\label{eq:scc}
	{ b }^2 \le a^2/\lam , 
\end{equation}
then $ f $ is convex. 
\end{prop}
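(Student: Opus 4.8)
The plan is to exhibit $f$ as a pointwise supremum of a family of convex functions of $x$, using the representation $\phi_b = \abs{\adot} - s_b$ from \eqref{eq:defsMC} together with the infimal-convolution form of the scaled Huber function from \eqref{eq:saic}. The one-line idea is that the only source of nonconvexity is the concave term $-\lam s_b$, and the condition $b^2 \le a^2/\lam$ is exactly what is needed for the quadratic data-fidelity term to absorb it.

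First I would substitute $\phi_b(x) = \abs{x} - s_b(x)$ into \eqref{eq:scprop_f} and replace $s_b(x)$ by its variational form $\min_{v}\{\abs{v} + \thalf b^2(x-v)^2\}$. Since this term enters with a minus sign, the minimum becomes a maximum, yielding
\begin{equation}
	f(x) = \sup_{v \in \RR} \Bigl\{ \half(y - ax)^2 + \lam\abs{x} - \lam\abs{v} - \thalf \lam b^2 (x-v)^2 \Bigr\}.
\end{equation}
For $b = 0$ the scaled penalty reduces to $\phi_0 = \abs{\adot}$ and $f$ is trivially convex, so I would treat $b \neq 0$ in the main argument.

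Next, for each fixed $v$, I would argue that the bracketed expression is a convex function of $x$. Collecting the two quadratic contributions $\half a^2 x^2$ and $-\thalf \lam b^2 x^2$ gives an $x^2$-coefficient equal to $\half(a^2 - \lam b^2)$, which is nonnegative precisely when $b^2 \le a^2/\lam$; every remaining term is either affine in $x$, the convex term $\lam\abs{x}$, or constant in $x$. Hence under \eqref{eq:scc} each inner function is convex in $x$, and $f$, being the pointwise supremum of convex functions, is convex.

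The main obstacle is really just the sign bookkeeping: one must verify that negating the infimal-convolution representation correctly turns $-\lam s_b$ into a supremum of integrands that are convex in $x$, and that after expanding both quadratics the residual $x^2$-coefficient is exactly $\half(a^2 - \lam b^2)$. An alternative, equally short route would bypass convex analysis: since $s_b$ is $C^1$ with $s_b''(x) = b^2$ for $\abs{x} < 1/b^2$ and $s_b''(x) = 0$ for $\abs{x} > 1/b^2$, the function $\half(y-ax)^2 - \lam s_b(x)$ has a.e.\ second derivative in $\{a^2 - \lam b^2,\, a^2\}$, both $\ge 0$ under \eqref{eq:scc}, so it is convex and $f$ is obtained by adding the convex term $\lam\abs{x}$. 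I would favor the supremum argument, since it is the one that generalizes to the multivariate GMC penalty in the later sections.
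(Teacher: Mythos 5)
Your proposal is correct and is essentially the paper's own proof: both substitute $\phi_b = \abs{\adot} - s_b$, use the variational form \eqref{eq:saic} so that the term $-\lam s_b$ turns the minimum into a pointwise supremum of functions convex in $x$, and conclude from the residual $x^2$-coefficient $\thalf(a^2 - \lam b^2) \ge 0$ under \eqref{eq:scc}. The only cosmetic differences are that the paper keeps $\lam \abs{x}$ and the extracted quadratic outside the maximum while you keep them inside the supremum, and your separate treatment of $b = 0$ is unnecessary since \eqref{eq:saic} is valid for $b = 0$ as well.
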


There are several ways to prove Proposition \ref{prop:sclar_convex}.
In anticipation of the multivariate case, 
we use a technique in the following proof 
that we later use in the proof of Theorem~\ref{thm:cc} in Sec. \ref{sec:spreg}.

\begin{proof}[Proof of Proposition \ref{prop:sclar_convex}]
Using \eqref{eq:defsMC}, we write $ f $ as
\begin{align*}
	f(x)
	& =
	\thalf (y - a x)^2 + \lam \abs{ x } -  \lam s_{ b }( x ) 
	\\
	& =
	g(x) + \lam  \abs{ x } 
\end{align*}
where
$ s_b $ is the scaled Huber function
and
$ g \colon \RR \to \RR $ is given by 
\begin{equation}
	\label{eq:gfun}
	g(x) =
	\thalf (y - a x)^2 - \lam s_{ b }( x ).
\end{equation}
Since the sum of two convex functions is convex,
it is sufficient to show $ g $ is convex. 
Using \eqref{eq:saic}, we have
\begin{align*}
	g(x) & =
	\thalf (y - a x)^2 - \lam \min_{ v \in \RR } \{ \abs{ v } + \thalf { b }^2 (x - v)^2 \} 
	\\
	& =
	\max_{ v \in \RR }
	\left\{
		\thalf (y - a x)^2 - \lam \abs{ v } - \thalf \lam { b }^2 (x - v)^2  
	\right\}
	\\
	& =
	\thalf ( a^2 - \lam { b }^2 ) x^2 
	+
	\max_{ v \in \RR }
	\big\{
		\thalf (y^2 - 2 a x y)
		 - \lam \abs{ v } - \thalf \lam { b }^2 ( v^2  - 2 x v )
	\big\}.
\end{align*}
Note that the expression in the curly braces is affine (hence convex) in $ x $.
Since the pointwise maximum of a set of convex functions is itself convex, 
the second term is convex in $ x $.
Hence, $ g $ is convex if $ a^2 - \lam { b }^2 \ge 0 $.
\end{proof}

The firm threshold function was defined by Gao and Bruce \cite{Gao_1997}
as a generalization of hard and soft thresholding.

\begin{figure}
	\centering
	\includegraphics{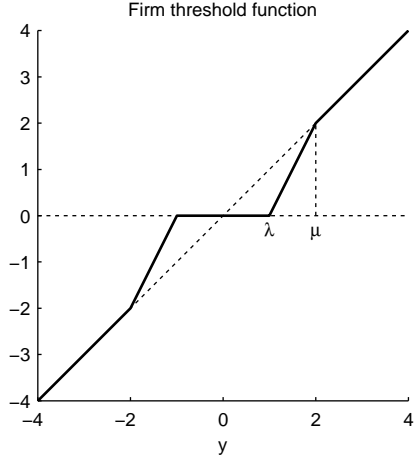}
	\caption{
		Firm threshold function.
	}
	\label{fig:firm}
\end{figure}

\begin{defn}
\label{def:firm}
Let $ \lam > 0 $ and $ \mu > \lam $.
The threshold function $ \firm \colon \RR \to \RR $ is defined as
\begin{equation}
	\firm(y; \lam, \mu) :=
	\begin{cases}
		0, \quad & \abs{ y } \le \lam
		\\
		\mu ( \abs{ y } - \lam ) / (\mu - \lam )  \sign(y), \quad &  \lam \le \abs{ y } \le \mu
		\\
		y, & \abs{ y } \ge \mu
	\end{cases}
\end{equation}
as illustrated in Fig.~\ref{fig:firm}.
\end{defn}
In contrast to the soft threshold function, the firm threshold function does not underestimate large amplitude values,
since it equals the identity for large values of its argument. 
As $ \mu \to \lam $ or $ \mu \to \infty $, the firm threshold function approaches the hard or soft threshold function, respectively.

\medskip

\noindent
We now state the correspondence between 
the MC penalty and the firm threshold function.
When $ f $ in \eqref{eq:scprop_f} is convex (i.e., $ b^2 \le a^2 / \lam $), 
the minimizer of $ f $ is given by firm thresholding. 
This is noted in Refs.~\cite{Fornasier_2008_ACHA, Zhang_2010_AnnalsStat, Woodworth_2016_InvProb, Bayram_2015_SPL}.

\begin{prop}
\label{prop:firm}
Let $ \lam > 0 $,
$ a > 0 $, $ b > 0 $,
and $ b^2 \le a^2 / \lam $.
Let $ y \in \RR $.
Then the minimizer of $ f $ in \eqref{eq:scprop_f} is given by firm thresholding, 
i.e.,
\begin{equation}
	\label{eq:sfirm}
	x\opt = \firm( y/a ; \lam/a^2 , 1/b^2 ).
\end{equation}
\end{prop}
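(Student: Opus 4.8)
The plan is to use convexity to replace the minimization of $f$ by a first-order optimality condition, and then to verify that the firm threshold value satisfies it. Since $b^2 \le a^2/\lam$, Proposition~\ref{prop:sclar_convex} guarantees that $f$ is convex; when $a^2 - \lam b^2 > 0$ the term $\thalf(a^2-\lam b^2)x^2$ makes $f$ strictly convex, so $f$ has a unique global minimizer $x\opt$, and $x\opt$ is that minimizer if and only if $0 \in \partial f(x\opt)$. I would therefore simply exhibit $x\opt = \firm(y/a;\,\lam/a^2,\,1/b^2)$ and confirm this inclusion directly, rather than carry out the minimization.

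First I would record the (sub)differential of $f$. From $\phi_b(x) = \abs{x} - s_b(x)$ and the closed form of $s_b$, the penalty $\phi_b$ is continuously differentiable away from the origin, with $\phi_b'(x) = \sign(x) - b^2 x$ for $0 < \abs{x} < 1/b^2$ and $\phi_b'(x) = 0$ for $\abs{x} > 1/b^2$; the two pieces match with value $0$ at $\abs{x} = 1/b^2$, so $f$ is $C^1$ there and the only nonsmooth point is $x = 0$, where $\partial\phi_b(0) = [-1,1]$. Hence $f$ is differentiable off the origin with
\[
	f'(x) = (a^2 - \lam b^2)\,x - a y + \lam\,\sign(x), \qquad 0 < \abs{x} < 1/b^2,
\]
\[
	f'(x) = a^2 x - a y, \qquad \abs{x} > 1/b^2,
\]
while at the origin $\partial f(0) = [-a y - \lam,\; -a y + \lam]$.

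The remaining work is a case analysis on the size of $y/a$ relative to the two thresholds $\lam/a^2$ and $1/b^2$, matching the three branches of the firm threshold. When $\abs{y/a} \le \lam/a^2$ (i.e. $\abs{a y} \le \lam$) one has $0 \in \partial f(0)$, giving $x\opt = 0$. When $\abs{y/a} \ge 1/b^2$, solving $a^2 x - a y = 0$ gives $x\opt = y/a$, which lies in its region. In the intermediate range I would solve $(a^2 - \lam b^2)x - a y + \lam\sign(x) = 0$, obtaining $x\opt = (a y - \lam\sign(y))/(a^2 - \lam b^2)$, and then verify by direct algebra that this equals the middle branch $\tfrac{(1/b^2)(\abs{y/a} - \lam/a^2)}{1/b^2 - \lam/a^2}\,\sign(y)$ of $\firm(y/a;\lam/a^2,1/b^2)$, after clearing the common factor $a^2 b^2$.

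I expect the delicate step to be confirming that the intermediate solution genuinely lies in its region, i.e. that $\lam/a^2 < \abs{y/a} < 1/b^2$ forces $0 < \abs{x\opt} < 1/b^2$; this is precisely where the hypothesis $b^2 < a^2/\lam$ is used, since it makes the denominator $a^2 - \lam b^2$ positive and controls both endpoints. I would also note the boundary behaviour: at equality $b^2 = a^2/\lam$ the two thresholds $\lam/a^2$ and $1/b^2$ coincide, the intermediate region collapses, and the formula degenerates to hard thresholding, consistent with the limiting definition of $\firm$ as $\mu \to \lam$.
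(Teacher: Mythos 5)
Your proof is correct, but note that the paper itself gives no proof of Proposition~\ref{prop:firm}: it states the result and points to Refs.~\cite{Fornasier_2008_ACHA, Zhang_2010_AnnalsStat, Woodworth_2016_InvProb, Bayram_2015_SPL}, so your subdifferential verification supplies an argument the paper leaves to the literature. Your route fits the paper's framework naturally: you invoke Proposition~\ref{prop:sclar_convex} for convexity (and the paper's proof of that proposition indeed writes $f$ as $\thalf(a^2-\lam b^2)x^2 + \lam\abs{x}$ plus a convex pointwise maximum, so $b^2 < a^2/\lam$ gives strict convexity and hence uniqueness, as you claim), compute $\partial f$ from the closed form of $\phi_b$, and check $0 \in \partial f(x\opt)$ in the three regimes; your interior check $(\abs{ay}-\lam)/(a^2-\lam b^2) < 1/b^2 \iff \abs{ay} < a^2/b^2$ is exactly right, as is the algebra identifying the middle branch after clearing the factor $a^2 b^2$. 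Two details are worth making explicit. First, in the intermediate regime you posit $\sign(x\opt)=\sign(y)$ before solving; this is harmless precisely because you are verifying rather than deriving---convexity makes any point with $0 \in \partial f$ a global minimizer, so no exclusion of an opposite-sign solution is needed---but one sentence saying so would tighten the write-up. Second, your boundary remark is correct and in fact slightly understates the issue: Definition~\ref{def:firm} requires $\mu > \lam$, i.e.\ $1/b^2 > \lam/a^2$, so at $b^2 = a^2/\lam$ the right-hand side of \eqref{eq:sfirm} is not defined as stated, strict convexity is lost, and the minimizer can be non-unique at the threshold; strictly speaking the proposition should be read with $b^2 < a^2/\lam$ (or with $\firm$ extended by its hard-threshold limit), which your closing observation already captures.
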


Hence, the minimizer of the scalar function $ f $ in \eqref{eq:scprop_f} 
is easily obtained via firm thresholding.
However, 
the situation in the multivariate case is more complicated. 
The aim of this paper is to 
generalize this process to the multivariate case:
to define a multivariate MC penalty generalizing \eqref{eq:defMC},
to define a regularized least squares cost function generalizing \eqref{eq:scprop_f},
to generalize the convexity condition \eqref{eq:scc},
and 
to provide a method to calculate a minimizer. 

\section{Generalized Huber Function}
\label{sec:huber}

In this section, 
we introduce a multivariate generalization of the Huber function. 
The basic idea is to generalize \eqref{eq:saic}
which expresses the scalar Huber function as an infimal convolution.

\begin{defn}
\label{def:ghuber}
Let $ B \in \RR^{ M \times N } $.
We define the generalized Huber function $ S_B \colon \RR^N \to \RR $ as
\begin{equation}
	\label{eq:defS}
	S_B(x) :=
	\inf_{ v \in \RR^N }
	\bigl\{
		\norm{ v }_1 + \thalf \norm{ B ( x - v) }_2^2
	\bigr\}.
\end{equation}
In the notation of infimal convolution, we have
\begin{equation}
	S_B = \norm{ \adot }_1 \infconv \thalf \norm{ B \adot }_2^2 .
\end{equation}
\end{defn}

\begin{prop}
\label{prop:lsc}
The generalized Huber function $ S_B $
is a proper lower semicontinuous convex function,
 and
the infimal convolution is exact, i.e., 
\begin{equation}
	\label{eq:defSm}
	S_B(x) =
	\min_{ v \in \RR^N }
	\bigl\{
		\norm{ v }_1 + \thalf \norm{ B ( x - v) }_2^2
	\bigr\}.
\end{equation}

\end{prop}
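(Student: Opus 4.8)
The plan is to read \eqref{eq:defS} as the partial minimization of a jointly convex function and to exploit the coercivity supplied by the $\ell_1$ term. First I would write $F \colon \RR^N \times \RR^N \to \RR$ as $F(x,v) = \norm{v}_1 + \thalf \norm{B(x-v)}_2^2$, so that $S_B(x) = \inf_{v} F(x,v)$. Since $v \mapsto \norm{v}_1$ is convex and $(x,v) \mapsto B(x-v)$ is linear, $F$ is a sum of convex functions of affine images of $(x,v)$, hence jointly convex. The infimum of a jointly convex function over one of its arguments is convex in the remaining argument, so $S_B$ is convex; equivalently one may invoke the standard fact that the infimal convolution of two convex functions is convex.

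Next I would pin down finiteness. Evaluating the objective at $v = x$ gives $S_B(x) \le \norm{x}_1$, while $F(x,v) \ge 0$ for every $v$ forces $S_B(x) \ge 0$. Hence $0 \le S_B(x) \le \norm{x}_1 < \infty$ for all $x$, so $S_B$ is real-valued, and in particular proper.

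The exactness claim is where attention is needed. Fixing $x$, consider $h(v) := F(x,v)$. This function is continuous, and $h(v) \ge \norm{v}_1 \to \infty$ as $\norm{v}_2 \to \infty$ (all norms on $\RR^N$ being equivalent), so $h$ is coercive; crucially the coercivity comes from the $\ell_1$ term and requires no assumption on $B$. Its sublevel sets are therefore closed and bounded, hence compact, and a lower semicontinuous function attains its minimum over a nonempty compact set. Thus the infimum in \eqref{eq:defS} is attained and may be written as a minimum, establishing \eqref{eq:defSm}.

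Finally, lower semicontinuity of $S_B$ follows for free: a convex function that is finite on all of $\RR^N$ is automatically continuous, hence lsc. This is the elegant resolution of the only real obstacle, since infimal convolutions of proper lsc convex functions need not themselves be lsc or exact in general; here both properties are rescued simultaneously by the coercivity of $\norm{\adot}_1$ and the resulting finiteness of $S_B$ everywhere on $\RR^N$.
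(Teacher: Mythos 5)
Your proof is correct, and it amounts to unpacking, in elementary finite-dimensional terms, the two textbook results that the paper simply cites. The paper's proof sets $f = \norm{\adot}_1$ and $g = \norm{B\adot}_2^2$, invokes Proposition 12.11 of \cite{Bauschke_2011} for convexity of $f \infconv g$, and then Proposition 12.14 of \cite{Bauschke_2011} (coercive $f$, bounded-below $g$, both in $\Gz(\RR^N)$) to obtain membership in $\Gz(\RR^N)$ and exactness in one stroke. You instead prove convexity via partial minimization of the jointly convex $F(x,v)$ (an equivalent mechanism), prove exactness by a direct Weierstrass argument --- continuity plus coercivity of $v \mapsto F(x,v)$, where the coercivity comes from the $\ell_1$ term alone so that no assumption on $B$ is needed, which is exactly the observation powering the paper's appeal to Proposition 12.14 --- and you obtain lower semicontinuity by a genuinely different route: the bound $0 \le S_B(x) \le \norm{x}_1$ shows $S_B$ is real-valued everywhere, and a finite convex function on $\RR^N$ is continuous, hence lsc. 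That last step exploits finite-dimensionality, whereas the paper's citations are valid in arbitrary Hilbert spaces; in exchange, your argument is self-contained and delivers the slightly stronger conclusion that $S_B$ is continuous (consistent with the paper's later result that $S_B$ is in fact differentiable). Your closing remark that infimal convolutions of $\Gz$ functions need not be exact or lsc in general is accurate, and you correctly identify the coercivity of $\norm{\adot}_1$ as the hypothesis that rescues both properties here.
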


\begin{proof}
Set $ f =  \norm{ \adot }_1 $ and $ g = \norm{ B \adot }_2^2 $.
Both $ f $ and $ g $ are convex; hence $ f \infconv g $ is convex by proposition 12.11 in \cite{Bauschke_2011}.
Since $ f $ is coercive and $ g $ is bounded below,
and
$ f, g \in \Gz(\RR^N) $,
it follows that $ f \infconv g \in \Gz(\RR^N) $
and the infimal convolution is exact
(i.e., the infimum is achieved for some $ v $)
 by Proposition 12.14 in \cite{Bauschke_2011}.
\end{proof}

Note that
if $ C\tr\! C = B\tr\! B $, then $ S_B(x) = S_C(x) $ for all $ x $.
That is, the generalized Huber function $ S_B $ depends only on $ B\tr\! B $,
not on $ B $ itself.
Therefore, without loss of generality, we may assume $ B $ has full row-rank. 
(If a given matrix $ B $ does not have full row-rank, then
there is another matrix $ C $ with full row-rank such that $ C\tr\! C = B\tr\! B $,
yielding the same function $ S_B $.)

As expected, the generalized Huber function reduces to the scalar Huber function.

\begin{prop}
If $ B $ is a scalar, i.e., $ B = b \in \RR $, 
then the generalized Huber function 
reduces to the scalar Huber function, 
$ S_b(x) = s_b(x) $ for all $ x \in \RR $.
\end{prop}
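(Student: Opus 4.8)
The plan is to reduce the multivariate definition of $S_B$ in \eqref{eq:defS} to the scalar case by specializing $B = b \in \RR$ and $N = 1$, then matching the result against the scalar characterization \eqref{eq:saic}. First I would observe that when $N = 1$ the infimum in \eqref{eq:defS} is over $v \in \RR$, and the matrix $B$ acts simply as multiplication by the scalar $b$. Substituting $B = b$ into \eqref{eq:defS} gives
\begin{equation*}
	S_b(x) = \inf_{v \in \RR} \bigl\{ \abs{ v } + \thalf \abs{ b(x - v) }^2 \bigr\}
	= \inf_{v \in \RR} \bigl\{ \abs{ v } + \thalf b^2 (x - v)^2 \bigr\},
\end{equation*}
where I have used $\norm{ v }_1 = \abs{ v }$ and $\norm{ b(x-v) }_2^2 = b^2 (x-v)^2$ for scalars.

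The key step is then to recognize that this expression is exactly the right-hand side of \eqref{eq:saic}, which by the proposition preceding it equals the scalar scaled Huber function $s_b(x)$. By Proposition \ref{prop:lsc} the infimal convolution is exact, so the infimum is attained and may be written as a minimum, matching the form in \eqref{eq:saic} precisely. This handles the case $b \neq 0$ directly.

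For the degenerate case $b = 0$ I would argue separately, since $s_0$ is defined by fiat as the zero function in Definition \ref{def:scaled_huber}. Setting $b = 0$ in the displayed infimum leaves $\inf_{v \in \RR} \abs{ v } = 0$, achieved at $v = 0$, so $S_0(x) = 0 = s_0(x)$ for all $x$. This matches the remark after the proof of the scaled-Huber proposition, where $\abs{ \adot } \infconv 0 = 0$ is invoked for the same purpose.

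I do not expect a genuine obstacle here: the statement is essentially a consistency check confirming that the multivariate generalization specializes correctly, and the work is purely notational — unwinding the $\ell_1$ and $\ell_2$ norms on $\RR^1$ and appealing to \eqref{eq:saic}. The only point requiring a sentence of care is the $b = 0$ boundary, where one must invoke the separate definition $s_0 := 0$ rather than the formula $s(b^2 x)/b^2$, which is undefined there; the infimal-convolution expression remains well-defined and yields the same value, so the two definitions agree.
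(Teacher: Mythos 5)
Your proof is correct and follows essentially the same route the paper intends: with $B = b$ and $N = 1$, the definition \eqref{eq:defS} specializes verbatim to the right-hand side of \eqref{eq:saic}, which the earlier proposition already identifies as $s_b(x)$ (including the $b = 0$ case via $\abs{\adot} \infconv 0 = 0$, so your separate treatment of $b=0$ is careful but already subsumed by \eqref{eq:saic}). The paper in fact states this proposition without a written proof, treating it as exactly the notational unwinding you carried out.
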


The generalized Huber function is separable (additive) when $ B\tr \! B $ is diagonal.

\begin{prop}
\label{prop:BBD}
Let $ B \in \RR^{ M \times N } $.
If $ B\tr\! B $ is diagonal, 
then the generalized Huber function is separable (additive), comprising a sum of scalar Huber functions.
Specifically,
\begin{equation}
	\nonumber
	B\tr\! B = \diag( \alpha_1^2, \dots, \alpha_N^2 )
	\ \implies \
	S_B(x) = \sum_{ n } s_{ \alpha_n }( x_n ).
\end{equation}
\end{prop}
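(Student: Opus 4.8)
The plan is to exploit the definition \eqref{eq:defS} directly and show that when $B\tr\! B$ is diagonal, the infimal convolution decouples across coordinates. The key observation is that both terms in the objective of \eqref{eq:defSm} separate: the $\ell_1$ norm is already separable, $\norm{v}_1 = \sum_n \abs{v_n}$, and when $B\tr\! B = \diag(\alpha_1^2,\dots,\alpha_N^2)$ we have $\norm{B(x-v)}_2^2 = (x-v)\tr B\tr\! B (x-v) = \sum_n \alpha_n^2 (x_n - v_n)^2$. Thus the objective is a sum of terms each depending on a single pair $(x_n, v_n)$.

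First I would write out the minimization in \eqref{eq:defSm} using the diagonal form, obtaining
\begin{equation}
\nonumber
S_B(x) = \min_{v \in \RR^N} \sum_{n=1}^{N} \Bigl\{ \abs{v_n} + \thalf \alpha_n^2 (x_n - v_n)^2 \Bigr\}.
\end{equation}
Since the summand indexed by $n$ depends only on $v_n$, the minimization over the vector $v$ factors into $N$ independent scalar minimizations over each $v_n$. I would invoke the elementary fact that minimizing a separable sum is the same as summing the individual minima, so that
\begin{equation}
\nonumber
S_B(x) = \sum_{n=1}^{N} \min_{v_n \in \RR} \Bigl\{ \abs{v_n} + \thalf \alpha_n^2 (x_n - v_n)^2 \Bigr\}.
\end{equation}
Each inner minimization is exactly the expression \eqref{eq:saic} defining the scalar scaled Huber function $s_{\alpha_n}(x_n)$, so the sum equals $\sum_n s_{\alpha_n}(x_n)$, as claimed.

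The remark preceding this proposition guarantees that $S_B$ depends only on $B\tr\! B$, so there is no loss in working with the quadratic form rather than $B$ itself; this justifies passing to the diagonal entries $\alpha_n^2$ without worrying about the shape of $B$. The main technical point to handle carefully is the interchange of the minimization over the full vector $v$ with the coordinatewise minimizations. This is not merely formal: it relies on the fact that the minimizing $v$ exists, which is precisely the exactness of the infimal convolution established in Proposition~\ref{prop:lsc}. Since the minimum is attained and the objective decouples into independent nonnegative scalar problems, the separation is valid and each piece attains its own minimum. I expect this interchange to be the only step requiring genuine justification; the remaining identifications are direct substitutions into definitions \eqref{eq:saic} and \eqref{eq:defsss}.
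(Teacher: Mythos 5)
Your proof is correct, and it is the natural argument: the paper itself states this proposition without proof, treating the separation as immediate, so your write-up simply supplies the routine verification the paper omits (diagonalize the quadratic form, split the $\ell_1$ norm, exchange the minimization with the sum, and recognize each scalar problem as \eqref{eq:saic}).

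One small conceptual correction: the interchange of the vector minimization with the coordinatewise minimizations does \emph{not} rely on exactness of the infimal convolution from Proposition~\ref{prop:lsc}. For any separable objective one has, unconditionally at the level of infima,
\begin{equation}
	\nonumber
	\inf_{v \in \RR^N} \sum_{n=1}^{N} f_n(v_n) = \sum_{n=1}^{N} \inf_{v_n \in \RR} f_n(v_n),
\end{equation}
since each coordinate can be chosen near-optimally independently of the others; no attainment is needed. Attainment only matters if you insist on writing $\min$ rather than $\inf$, and for that you do not need the general machinery either: for $\alpha_n \neq 0$ the scalar function $\abs{v_n} + \thalf \alpha_n^2 (x_n - v_n)^2$ is continuous and coercive, hence attains its minimum, while for $\alpha_n = 0$ the infimum $\min_{v_n} \abs{v_n} = 0$ is attained at $v_n = 0$, consistent with the convention $s_0 = 0$ (and with the paper's remark that \eqref{eq:saic} holds for $b = 0$). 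So the step you flagged as the ``only step requiring genuine justification'' is in fact elementary, and your proof goes through with the citation of Proposition~\ref{prop:lsc} deleted. Your appeal to the remark that $S_B$ depends only on $B\tr\! B$ is apt and is exactly why the shape of $B$ is irrelevant.
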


The utility of the generalized Huber function will be most apparent when $ B\tr \! B $ is a non-diagonal matrix. 
In this case, the generalized Huber function is non-separable, as illustrated in the following two examples.

\begin{example}
For the matrix
\begin{equation}
	\label{eq:BA}
	B = 
	\begin{bmatrix}
	    1    & 0
	    \\
	   1    & 1
	   \\
	    0    & 1
	\end{bmatrix},
\end{equation}
the generalized Huber function $ S_B $ is shown in Fig.~\ref{fig:huber2_rank2_A}.
As shown in the contour plot, the level sets of $ S_B $ near the origin are ellipses.

\end{example}

\begin{example}
For the matrix
\begin{equation}
	\label{eq:BB}
	B =  [1 \ \ 0.5]
\end{equation}
the generalized Huber function $ S_B $ is shown in Fig.~\ref{fig:huber2_rank1_A}.
The level sets of $ S_B $ are parallel lines
because $ B $ is of rank 1.
\end{example}

\begin{figure}[t]
	\begin{minipage}[t]{0.45\textwidth}
		\centering
		\includegraphics{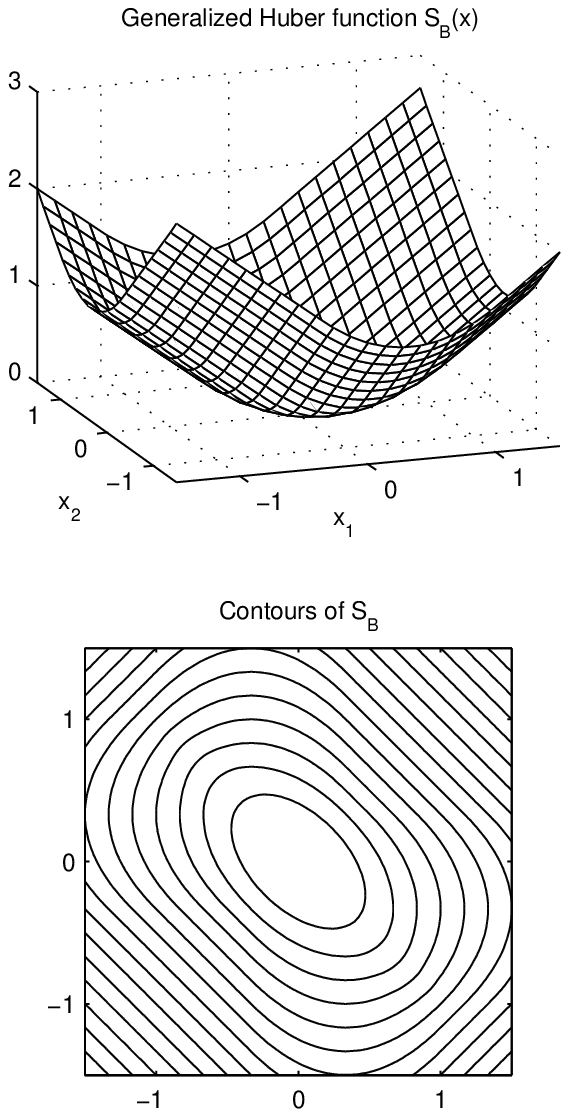}
		\caption{
			The generalized Huber function
			for the matrix $ B $ in \eqref{eq:BA}.
		}
		\label{fig:huber2_rank2_A}
	\end{minipage}
	\hfill
	\begin{minipage}[t]{0.45\textwidth}
		\centering
		\includegraphics{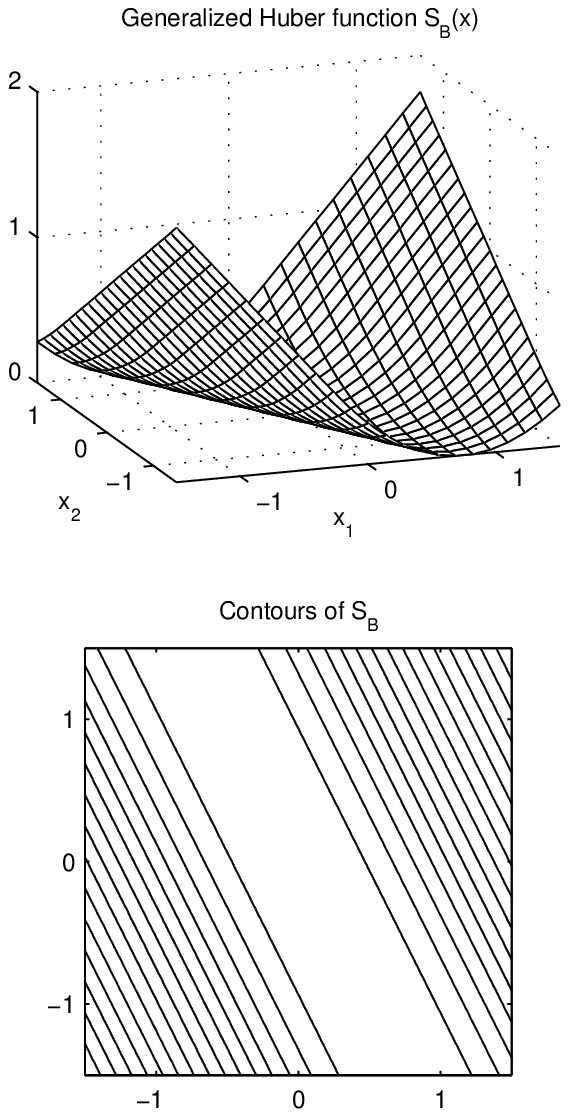}
		\caption{
			The generalized Huber function
			for the matrix $ B $ in \eqref{eq:BB}.
		}
		\label{fig:huber2_rank1_A}
	\end{minipage}
\end{figure}

There is not a simple explicit formula for the generalized Huber function. 
But, using \eqref{eq:defSm} we can derive several properties 
regarding the function.

\begin{prop}
\label{prop:SBltL1}
Let $ B \in \RR^{ M \times N } $.
The generalized Huber function satisfies
\begin{equation}
	0 \le
	S_B(x) \le \norm{ x }_1,
	\quad
	\forall x\in \RR^N.
\end{equation}
\end{prop}
\begin{proof}
Using \eqref{eq:defSm}, we have
\begin{align*}
	S_B(x) 
	& =
	\min_{ v \in \RR^N }
	\bigl\{		\norm{ v }_1 + \thalf \norm{ B ( x - v) }_2^2		\bigr\}
	\\
	& \leq
	\Bigl[
	\norm{ v }_1 + \thalf \norm{ B ( x - v) }_2^2
	\Bigr]_{ v = x }
	\\
	& = 
	\norm{ x }_1.
\end{align*}
Since $ S_B $ is the minimum of a non-negative function, 
it also follows that $ S_B(x) \ge 0 $ for all $ x $.
\end{proof}

The following proposition 
accounts for
the ellipses near the origin in the contour plot of the generalized Huber 
function in Fig.~\ref{fig:huber2_rank2_A}.
(Further from the origin, the contours are not ellipsoidal.)
\begin{prop}
\label{prop:sbnz}
Let $ B \in \RR^{ M \times N } $.
The generalized Huber function 
satisfies
\begin{equation}
	S_B(x) = \thalf \norm{ B x }_2^2
	\ \
	\text{for all} \ \ 
	\norm{ B\tr\! B x }_\infty \le 1.
\end{equation}
\end{prop}
\begin{proof}
From \eqref{eq:defSm}, 
we have that $ S_B(x) $ is the minimum value of $ g $
where $ g \colon \RR^N \to \RR $
is given by
\begin{equation*}
	g(v) = \norm{ v }_1 + \thalf \norm{ B ( x - v) }_2^2.
\end{equation*}
Note that 
\begin{equation*}
	 g(0) = \thalf \norm{ B x }_2^2 . 
\end{equation*}
Hence, it suffices to show that $ 0 $ minimizes $ g $
if and only if $ \norm{ B\tr\! B x }_\infty \le 1 $.
Since $ g $ is convex, 
$ 0 $ minimizes $ g $ if and only if $ 0 \in \partial g( 0 ) $
where $ \partial g $ is the subdifferential of $ g $ given by
\begin{equation*}
	\partial g(v) = \sign(v) + B\tr\! B ( v - x )
\end{equation*}
where
$ \sign $ is the set-valued signum function,
\begin{equation*}
	\sign( t ) :=
	\begin{cases}	
		\{ 1 \}, & t > 0
		\\
		[-1, \ 1], \ & t = 0
		\\
		\{ -1 \}, & t < 0.
	\end{cases}
\end{equation*}
It follows that $ 0 $ minimizes $ g $ if and only if
\begin{alignat*}{2}
	0  \in \sign(0) - B\tr\! B x
	\quad
	\Leftrightarrow &&
	B\tr\! B x
	&
	\in  [-1, \ 1]^N
	\\
	\Leftrightarrow &&
	\bigl[ B\tr\! B x \bigr]_n & \in [-1, \ 1] \ \ \text{for $ n = 1, \dots, N $}
	\\
	\Leftrightarrow & \quad &
	\norm{ B\tr\! B x }_\infty & \le 1.
\end{alignat*}
Hence, the function $ S_B $ coincides with 
$ \thalf \norm{ B \adot }_2^2 $
on a subset of its domain. 
\end{proof}

\begin{prop}
Let $ B \in \RR^{ M \times N } $
and
set $ \alpha = \norm{ B }_2 $.
The generalized Huber function satisfies
\begin{align}
	S_B(x) 
	& \le S_{ \alpha I } ( x ),
	\quad
	\forall x \in \RR^N
	\\
	\label{eq:SBI}
	& =
	\sum_n s_\alpha( x_n).
\end{align}
\end{prop}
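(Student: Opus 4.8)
The plan is to exploit the variational definition \eqref{eq:defSm} together with the operator-norm inequality recalled in Section \ref{sec:notation}. Since $ \alpha = \norm{B}_2 $, that section gives $ \norm{Bu}_2 \le \alpha \norm{u}_2 $ for every $ u \in \RR^N $, and hence $ \thalf \norm{Bu}_2^2 \le \thalf \alpha^2 \norm{u}_2^2 = \thalf \norm{\alpha I u}_2^2 $; equivalently $ B\tr\! B \mle \alpha^2 I $. This is the only structural fact about $ B $ that I intend to use.

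First I would fix $ x $ and apply this inequality with $ u = x - v $ for an arbitrary $ v \in \RR^N $. Adding $ \norm{v}_1 $ to both sides yields, for every $ v $,
\begin{equation*}
	\norm{ v }_1 + \thalf \norm{ B(x-v) }_2^2
	\le
	\norm{ v }_1 + \thalf \norm{ \alpha I (x-v) }_2^2 .
\end{equation*}
Because this holds pointwise in $ v $, taking the infimum over $ v $ on each side preserves the inequality: if $ g(v) \le h(v) $ for all $ v $, then $ \inf_v g(v) \le \inf_v h(v) $. Concretely, I would evaluate the left-hand infimum at a minimizer $ v^\star $ of the right-hand side (which exists by Proposition \ref{prop:lsc}), giving $ S_B(x) = \inf_v g(v) \le g(v^\star) \le h(v^\star) = S_{\alpha I}(x) $. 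By \eqref{eq:defSm} the two infima are exactly $ S_B(x) $ and $ S_{\alpha I}(x) $, which establishes the first inequality.

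For the closed form \eqref{eq:SBI}, I would invoke Proposition \ref{prop:BBD} with $ B $ replaced by $ \alpha I $. Here $ (\alpha I)\tr (\alpha I) = \alpha^2 I = \diag(\alpha^2, \dots, \alpha^2) $ is diagonal, so the separability result applies with every $ \alpha_n = \alpha $, yielding $ S_{\alpha I}(x) = \sum_n s_\alpha(x_n) $.

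The argument is essentially immediate; the only point requiring care is the direction of the inequality under minimization. The dominating integrand must be the one with the larger quadratic term (the $ \alpha I $ one), so that its minimum dominates, and I would make the chain $ S_B(x) \le g(v^\star) \le h(v^\star) = S_{\alpha I}(x) $ explicit rather than appealing vaguely to ``monotonicity of the minimum,'' since a careless statement could invert the inequality.
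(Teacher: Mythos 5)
Your proposal is correct and follows essentially the same route as the paper: both bound the quadratic term pointwise in $v$ via $\norm{B(x-v)}_2 \le \norm{B}_2 \norm{x-v}_2$, pass the inequality through the minimization over $v$ using \eqref{eq:defSm}, and then obtain \eqref{eq:SBI} from Proposition \ref{prop:BBD} applied to $\alpha I$. Your extra care about the direction of the inequality under the infimum is a sound (if implicit in the paper) clarification, not a deviation.
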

\begin{proof}
Using \eqref{eq:defSm}, we have 
\begin{align*}
	S_B(x)
	& =
	\min_{ v \in \RR^N }
	\bigl\{		\norm{ v }_1 + \thalf \norm{ B ( x - v) }_2^2		\bigr\}
	\\
	& \le
	\min_{ v \in \RR^N }
	\bigl\{		\norm{ v }_1 + \thalf \norm{ B }_2^2 \, \norm{ ( x - v) }_2^2		\bigr\}
	\\
	& =
	\min_{ v \in \RR^N }
	\bigl\{		\norm{ v }_1 + \thalf \alpha^2 \norm{ ( x - v) }_2^2		\bigr\}
	\\
	& =
	\min_{ v \in \RR^N }
	\bigl\{		\norm{ v }_1 + \thalf  \norm{ \alpha \, ( x - v) }_2^2		\bigr\}
	\\
	& = S_{ \alpha I }(x).
\end{align*}
From Proposition \ref{prop:BBD} we have \eqref{eq:SBI}.
\end{proof}

The Moreau envelope is well studied in convex analysis
\cite{Bauschke_2011}.
Hence, 
it is useful to express the generalized Huber function $ S_B $ in terms of 
a Moreau envelope,
so we can draw on results in convex analysis to
derive further properties of the generalized Huber function. 

\begin{lemma}
If $ B \in \RR^{ N \times N } $ is invertible, then
the generalized Huber function $ S_B $ can be expressed in terms of a Moreau envelope as
\begin{equation}
	\label{eq:SBME1}
	S_B = \big( \norm{ \adot }_1 \compose B\inv )\me \compose B.
\end{equation}

\end{lemma}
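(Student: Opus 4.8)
The plan is to unfold the right-hand side of \eqref{eq:SBME1} using the definition of the Moreau envelope and then reduce it to the definition \eqref{eq:defS} of $ S_B $ by a linear change of variables, exploiting the invertibility of $ B $. First I would set $ h := \norm{ \adot }_1 \compose B\inv $, so that $ h(w) = \norm{ B\inv w }_1 $ (this is well defined precisely because $ B $ is invertible). By the definition of the Moreau envelope,
\begin{equation*}
	h\me(z) = \inf_{ w \in \RR^N } \bigl\{ \norm{ B\inv w }_1 + \thalf \norm{ z - w }_2^2 \bigr\},
\end{equation*}
so that, evaluating at $ z = B x $,
\begin{equation*}
	( h\me \compose B )(x) = \inf_{ w \in \RR^N } \bigl\{ \norm{ B\inv w }_1 + \thalf \norm{ B x - w }_2^2 \bigr\}.
\end{equation*}

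Next I would substitute $ w = B v $. Since $ B $ is invertible, the map $ v \mapsto B v $ is a bijection of $ \RR^N $ onto itself, so the infimum over $ w \in \RR^N $ equals the infimum over $ v \in \RR^N $. Under this substitution $ B\inv w = v $ and $ B x - w = B(x - v) $, which yields
\begin{equation*}
	( h\me \compose B )(x) = \inf_{ v \in \RR^N } \bigl\{ \norm{ v }_1 + \thalf \norm{ B (x - v) }_2^2 \bigr\},
\end{equation*}
and the right-hand side is exactly $ S_B(x) $ by \eqref{eq:defS}. This establishes \eqref{eq:SBME1}.

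The argument is essentially a one-line change of variables, so I do not anticipate any substantial obstacle. The only place the invertibility hypothesis is genuinely used is in asserting that $ w = B v $ is a bijection, which is what guarantees that the two infima coincide; it is also what makes the composition $ \norm{ \adot }_1 \compose B\inv $ meaningful in the first place. For completeness I would remark that both infima are in fact attained, by Proposition \ref{prop:lsc}, so the expressions could equally be written with $ \min $ in place of $ \inf $.
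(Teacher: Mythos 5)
Your proof is correct and is essentially the paper's argument: the paper carries out the same change of variables in infimal-convolution notation, inserting $B\inv \compose B$ and using that composition with an invertible linear map distributes over $\infconv$, i.e., $\bigl( \norm{ \adot }_1 \infconv ( \thalf \norm{ \adot }_2^2 \compose B ) \bigr) \compose B\inv = \bigl( \norm{ \adot }_1 \compose B\inv \bigr) \infconv \thalf \norm{ \adot }_2^2$, which is exactly your substitution $ w = B v $ made explicit. Your remark that the infima are attained (via Proposition~\ref{prop:lsc}) is a harmless addition not needed for the identity.
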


\begin{proof}
Using \eqref{eq:defS}, we have
\begin{align*}
	S_B
	& =
	\norm{ \adot }_1 \infconv \bigr( \thalf \norm{ \adot }_2^2 \compose B \bigr)
	\\
	& =
	\Bigl( \norm{ \adot }_1 \infconv \bigr( \thalf \norm{ \adot }_2^2 \compose B \bigr) \Bigr) \compose B\inv \compose B
	\\
	& =
	\Bigl( \bigl( \norm{ \adot }_1 \compose B\inv \bigr) \infconv \bigl( \thalf \norm{ \adot }_2^2 \bigr) \Bigr) \compose B
	\\
	& =
	\bigl( \norm{ \adot }_1 \compose B\inv \bigr)\me \compose B .
	\notag
	\qedhere
\end{align*}

\end{proof}

\begin{lemma}
\label{lemma:me}
If $ B \in \RR^{ M \times N } $ has full row-rank,
then the generalized Huber function $ S_B $ can be expressed in terms of a Moreau envelope as
\begin{equation}
	\label{eq:SBME2}
	S_B = \big( d \compose B\pinv )\me \compose B
\end{equation}
where $ d \colon \RR^N \to \RR $ is the convex distance function 
\begin{equation}
	\label{eq:defdist}
	d(x) = \min_{ w \in \nulls B } \norm{ x - w }_1
\end{equation}
which represents the distance from the point $ x \in \RR^N $ to the null space of $ B $ as measured by the $ \ell_1 $ norm.
\end{lemma}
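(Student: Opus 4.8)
The plan is to start from the exact-minimum form of $S_B$ established in Proposition~\ref{prop:lsc} and to reduce the $N$-dimensional minimization over $v$ to an $M$-dimensional minimization over $z = Bv$, whereupon the Moreau-envelope structure appears. First I would write
\[
	S_B(x) = \min_{ v \in \RR^N } \bigl\{ \norm{ v }_1 + \thalf \norm{ B x - B v }_2^2 \bigr\}
\]
and substitute $z = Bv$. Because $B$ has full row-rank it is surjective, so $z$ ranges over all of $\RR^M$, and the single minimization over $v$ splits into an outer minimization over $z \in \RR^M$ and an inner minimization over the fiber $\{ v : Bv = z \}$. Since the quadratic term $\thalf \norm{ B x - z }_2^2$ depends only on $z$, it factors out of the inner problem, giving
\[
	S_B(x) = \min_{ z \in \RR^M } \Bigl\{ \thalf \norm{ B x - z }_2^2 + \min_{ v : Bv = z } \norm{ v }_1 \Bigr\}.
\]

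The key step is to identify the inner infimum with $d(B\pinv z)$. Here I would use that $B B\pinv = I$ --- valid precisely because $B$ has full row-rank, so that $BB\tr$ is invertible and $B\pinv = B\tr (BB\tr)\inv$ --- which shows $B\pinv z$ is a particular solution of $Bv = z$ and hence that the fiber is the affine set $\{ v : Bv = z \} = B\pinv z + \nulls B$. As $\nulls B$ is a subspace, $B\pinv z + \nulls B = \{ B\pinv z - w : w \in \nulls B \}$, so
\[
	\min_{ v : Bv = z } \norm{ v }_1 = \min_{ w \in \nulls B } \norm{ B\pinv z - w }_1 = d(B\pinv z),
\]
matching definition \eqref{eq:defdist}. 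Substituting back yields
\[
	S_B(x) = \min_{ z \in \RR^M } \bigl\{ (d \compose B\pinv)(z) + \thalf \norm{ B x - z }_2^2 \bigr\},
\]
which is exactly the Moreau envelope $(d \compose B\pinv)\me$ evaluated at $Bx$, i.e.\ $\bigl[ (d \compose B\pinv)\me \compose B \bigr](x)$, as claimed.

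The main obstacle I anticipate is the bookkeeping around the change of variables: confirming that surjectivity makes $z$ sweep all of $\RR^M$, that $BB\pinv = I$ (the full-row-rank hypothesis enters here, and essentially only here), and that the fiber coincides with $B\pinv z + \nulls B$. Once this fiber description is in hand, recognizing the inner minimum as the $\ell_1$ distance $d$ and the outer minimum as a Moreau envelope is routine. Attainment of the infima need not be argued separately, since Proposition~\ref{prop:lsc} already guarantees the infimal convolution defining $S_B$ is exact.
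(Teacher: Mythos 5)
Your proof is correct, and it reaches the paper's key identity $S_B(x) = \min_{z \in \RR^M} \bigl\{ d(B\pinv z) + \thalf \norm{Bx - z}_2^2 \bigr\}$ by a mildly different route. The paper decomposes the variable orthogonally, $v = u + w$ with $u \in (\nulls B)^\perp$ and $w \in \nulls B$: since $Bw = 0$, minimizing over $w$ first yields $d(u)$, and the remaining constrained minimum over $u \in (\nulls B)^\perp$ is then converted to an unconstrained minimum over $\RR^M$ using $(\nulls B)^\perp = \ranges B\tr$ together with the two-stage substitution $u = B\tr v$ followed by $v \mapsto (BB\tr)\inv v$, which is where $B\pinv$ emerges. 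You instead partition $\RR^N$ into the fibers $\{v : Bv = z\}$ of the surjection $B$ and minimize the marginal over $z \in \RR^M$, identifying the fiber as $B\pinv z + \nulls B$ via $BB\pinv = I$, so that the inner minimum is $d(B\pinv z)$ in a single step. The two arguments are equivalent in substance --- your representative $B\pinv z$ is exactly the element of the fiber lying in $\ranges B\tr = (\nulls B)^\perp$, i.e., the paper's variable $u$ --- but yours replaces the two-stage reparametrization by one change of variables and makes crisply visible the only places full row-rank is used (surjectivity of $B$, so that $z$ sweeps $\RR^M$, and $BB\pinv = I$). One minor omission relative to the paper: the lemma asserts that $d$ is \emph{convex}, and the paper's proof pauses to justify this (citing Proposition 8.26 of Bauschke--Combettes and examples in Boyd--Vandenberghe), whereas you use $d$ without comment; a one-line remark --- e.g., that $d$ is the partial minimization over the subspace $\nulls B$ of the jointly convex function $(x, w) \mapsto \norm{x - w}_1$, or equivalently an infimal convolution of $\norm{\adot}_1$ with the (convex) indicator of $\nulls B$ --- would make your argument fully self-contained.
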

\begin{proof}
Using \eqref{eq:defSm}, we have
\begin{align*}
	S_B(x)
	& =
	\min_{ v \in \RR^N }
	\bigl\{	 \norm{ v }_1 + \thalf \norm{ B ( x - v) }_2^2 	\bigr\}
	\\
	& = f( B x)
\end{align*}
where $ f \colon \RR^M \to \RR $ is given by
\begin{align*}
	f(z) & = 
	\min_{ v \in \RR^N }
	\bigl\{	\norm{ v }_1 + \thalf \norm{ z - B v }_2^2		\,	\bigr\}
	\\
	& =
	\min_{ u \in (\nulls B)^\perp }
	\;
	\min_{ w \in \nulls B }
	\bigl\{		\norm{ u + w }_1 + \thalf \norm{ z - B ( u + w )  }_2^2		\,	\bigr\}
	\\
	& = 
	\min_{ u \in (\nulls B)^\perp }
	\;
	\min_{ w \in \nulls B }
	\bigl\{		\norm{ u + w }_1 + \thalf \norm{ z - B u }_2^2		\,	\bigr\}
	\\
	& = 
	\min_{ u \in (\nulls B)^\perp }
	\bigl\{		d(u) + \thalf \norm{ z - B u }_2^2		\,	\bigr\}
\end{align*}
where $ d $ is the convex function given by \eqref{eq:defdist}.
The fact that $ d $ is convex follows from Proposition 8.26 of \cite{Bauschke_2011}
and Examples 3.16 and 3.17 of \cite{Boyd_2004}.
Since $ (\nulls B)^\perp = \ranges B\tr $, 
\begin{align*}
	f(z)
	& = 
	\min_{ u \in \ranges B\tr  }
	\bigl\{	d(u) + \thalf \norm{ z - B u }_2^2		\bigr\}
	\\
	& = 
	\min_{ v \in \RR^M  }
	\bigl\{	d( B\tr\! v ) + \thalf \norm{ z - B B\tr\! v }_2^2		\bigr\}
	\\
	& = 
	\min_{ v \in \RR^M  }
	\bigl\{	d( B\tr (B B\tr)\inv v ) + \thalf \norm{ z - B B\tr  (B B\tr)\inv  v }_2^2	\bigr\}
	\\
	& = 
	\min_{ v \in \RR^M  }
	\bigl\{	d( B\pinv v ) + \thalf \norm{ z - v }_2^2		\bigr\}
	\\
	& = 
	\bigl( d ( B\pinv \adot ) \bigr)\me (z).
\end{align*}
Hence,
$ S_B(x) = \bigl( d ( B\pinv \adot ) \bigr)\me (Bx) $ 
which completes the proof. 
\end{proof}

Note that \eqref{eq:SBME2} reduces to \eqref{eq:SBME1} when $ B $ is invertible.
(Suppose $ B $ is invertible. 
Then $ \nulls B = \{ 0 \} $;  hence $ d(x) = \norm{ x }_1 $ in \eqref{eq:defdist}.
Additionally, $ B\pinv = B\inv $.)

\begin{prop}
The generalized Huber function is differentiable.
\end{prop}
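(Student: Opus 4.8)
The plan is to leverage Lemma \ref{lemma:me}, which already expresses $S_B$ as the composition of a Moreau envelope with a linear map, together with the standard fact that the Moreau envelope of any proper lsc convex function is differentiable. First I would reduce to the case where $B$ has full row-rank; as observed after Proposition \ref{prop:lsc}, the function $S_B$ depends only on $B\tr\!B$, so this entails no loss of generality, and Lemma \ref{lemma:me} is then applicable.

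With $B$ of full row-rank, Lemma \ref{lemma:me} gives $S_B = h\me \compose B$, where $h = d \compose B\pinv$ and $d$ is the $\ell_1$ distance to $\nulls B$ from \eqref{eq:defdist}. The second step is to verify that $h \in \Gz(\RR^M)$, so that the differentiability theorem for Moreau envelopes applies. The function $d$ is convex (as established in the proof of Lemma \ref{lemma:me}) and is real-valued and Lipschitz, being an $\ell_1$ distance to the nonempty subspace $\nulls B$; hence $d \in \Gz(\RR^N)$. Precomposing with the linear map $B\pinv$ preserves finiteness, continuity, and convexity, so $h = d \compose B\pinv \in \Gz(\RR^M)$.

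The third step is the crux, though it is classical: for any $h \in \Gz(\RR^M)$ the Moreau envelope $h\me$ is differentiable on all of $\RR^M$, with gradient $\nabla h\me = \mathrm{Id} - \prox_h$ \cite{Bauschke_2011}. Since $B$ is linear and hence smooth, applying the chain rule to $S_B = h\me \compose B$ shows that $S_B$ is differentiable, with $\nabla S_B(x) = B\tr\bigl( Bx - \prox_h(Bx) \bigr)$.

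I do not expect a genuine obstacle, since all of the analytic difficulty was already absorbed into Lemma \ref{lemma:me}. The one point requiring a moment's care is confirming that $h = d \compose B\pinv$ is proper and lower semicontinuous, so that the Moreau-envelope differentiability result genuinely applies; this follows immediately once one notes that $d$ is real-valued and continuous. As a by-product, because $\mathrm{Id} - \prox_h$ is nonexpansive, $\nabla h\me$ is $1$-Lipschitz, so $\nabla S_B$ is in fact Lipschitz and $S_B$ is continuously differentiable — somewhat more than the statement claims.
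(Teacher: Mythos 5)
Your proof is correct and takes essentially the same route as the paper: both invoke Lemma \ref{lemma:me} to write $S_B$ as the Moreau envelope of a convex function composed with a linear map, and then apply the standard differentiability result for Moreau envelopes (Proposition 12.29 of \cite{Bauschke_2011}). Your write-up merely makes explicit some details the paper leaves implicit — the reduction to full row-rank, the verification that $d \compose B\pinv \in \Gz(\RR^M)$, and the gradient formula $\nabla S_B(x) = B\tr\bigl(Bx - \prox_h(Bx)\bigr)$ — all of which are sound.
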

\begin{proof}
By Lemma \ref{lemma:me}, $ S_B $ is the composition of a Moreau envelope of a convex function
and a linear function.
Additionally, by Proposition \ref{prop:lsc}, $ S_B \in \Gz(\RR^N) $.
By Proposition 12.29 in \cite{Bauschke_2011}, it follows that $ S_B $ is differentiable.
\end{proof}

The following result regards the gradient of the generalized Huber function.
This result will be used in Sec.~\ref{sec:gmc} to show 
the generalized MC penalty defined therein 
constitutes a valid penalty.

\begin{lemma}
\label{gradbnd}
The gradient of the generalized Huber function $ S_B \colon \RR^N \to \RR $ satisfies
\begin{equation}
	\norm{ \nabla S_B(x) }_\infty \le 1
	\ \
	\text{for all} \ x \in \RR^N.
\end{equation}
\end{lemma}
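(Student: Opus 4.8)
The plan is to derive an explicit formula for $\nabla S_B(x)$ in terms of a minimizer of the defining infimal convolution, and then read off the bound directly from the first-order optimality condition that this minimizer satisfies. Fix $x \in \RR^N$. By Proposition \ref{prop:lsc} the infimal convolution in \eqref{eq:defSm} is exact, so there exists $v^\ast \in \RR^N$ attaining the minimum, and
\[
	S_B(x) = \norm{ v^\ast }_1 + \thalf \norm{ B( x - v^\ast ) }_2^2 .
\]

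First I would establish that $\nabla S_B(x) = B\tr\! B ( x - v^\ast )$. Consider the quadratic $\phi(z) := \norm{ v^\ast }_1 + \thalf \norm{ B( z - v^\ast ) }_2^2$. Taking $v = v^\ast$ in the minimization \eqref{eq:defSm} shows $S_B(z) \le \phi(z)$ for every $z \in \RR^N$, while $S_B(x) = \phi(x)$ by the choice of $v^\ast$. Hence $S_B - \phi$ attains a global maximum at $x$; since $S_B$ is differentiable (by the preceding proposition) and $\phi$ is differentiable, the first-order condition forces $\nabla S_B(x) = \nabla \phi(x) = B\tr\! B ( x - v^\ast )$.

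Next I would invoke the optimality condition for $v^\ast$. The objective $v \mapsto \norm{ v }_1 + \thalf \norm{ B(x-v) }_2^2$ is convex with subdifferential $\sign(v) - B\tr\! B ( x - v )$ (the same computation as in the proof of Proposition \ref{prop:sbnz}, up to sign convention), so $v^\ast$ minimizes it if and only if $0 \in \sign(v^\ast) - B\tr\! B ( x - v^\ast )$, i.e. $B\tr\! B ( x - v^\ast ) \in \sign(v^\ast)$ componentwise. Since every element of the set-valued signum $\sign(t)$ lies in $[-1,1]$, each component of $B\tr\! B ( x - v^\ast )$ lies in $[-1,1]$, so $\norm{ B\tr\! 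B ( x - v^\ast ) }_\infty \le 1$. Combining this with the gradient formula gives $\norm{ \nabla S_B(x) }_\infty \le 1$, as claimed. Note that this argument requires nothing beyond exactness and differentiability, so it applies to an arbitrary $B \in \RR^{M\times N}$.

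The only delicate point is the gradient formula: differentiability of $S_B$ by itself does not reveal the value of the gradient, so the \emph{touching quadratic from above} step is what does the real work, converting the pointwise inequality $S_B \le \phi$ with equality at $x$ into equality of gradients at $x$. Everything else reduces to the routine subdifferential calculation already carried out in the proof of Proposition \ref{prop:sbnz}, together with the elementary observation that $\sign(\cdot) \subseteq [-1,1]$.
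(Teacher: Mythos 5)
Your proof is correct, and it takes a genuinely different route from the paper's. The paper never computes $\nabla S_B$ at all: it combines the first-order convexity inequality $S_B(v) + [\nabla S_B(v)]\tr (x - v) \le S_B(x)$ with the global bound $S_B(x) \le \norm{x}_1$ of Proposition \ref{prop:SBltL1}, then tests the resulting inequality along each coordinate axis, taking $x = (0,\dots,0,t,0,\dots,0)$; since $c(v) + [\nabla S_B(v)]_n\, t \le \abs{t}$ must hold for all $t \in \RR$, it forces $\abs{[\nabla S_B(v)]_n} \le 1$. Your argument instead identifies the gradient pointwise: exactness of the infimal convolution (Proposition \ref{prop:lsc}) supplies a minimizer $v^\ast$, the quadratic $\phi(z) = \norm{v^\ast}_1 + \thalf \norm{B(z - v^\ast)}_2^2$ majorizes $S_B$ with equality at $x$, and differentiability of both functions converts the global maximum of $S_B - \phi$ at $x$ into $\nabla S_B(x) = B\tr\! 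B(x - v^\ast)$; Fermat's rule for the inner convex problem then places this vector in $\sign(v^\ast) \subseteq [-1,1]^N$, exactly as in the computation of Proposition \ref{prop:sbnz}. Both routes lean on the differentiability proposition, but they trade different ingredients: the paper's proof is shorter and needs only convexity of $S_B$ plus the bound $S_B \le \norm{\adot}_1$, never touching the minimizer; yours requires exactness but delivers strictly more, namely the explicit identity $\nabla S_B(x) = B\tr\! B\bigl(x - v^\ast(x)\bigr) \in \partial \norm{\adot}_1\bigl(v^\ast(x)\bigr)$ --- the analogue of the standard gradient formula for Moreau envelopes --- from which the $\ell_\infty$ bound falls out as a corollary. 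Your self-diagnosis of the delicate step is also accurate: the touching-quadratic argument is what does the work, and it is sound here because the maximum of $S_B - \phi$ at $x$ is global over $\RR^N$, so no interiority caveat is needed.
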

\begin{proof}
Since $ S_B $ is convex and differentiable, we have
\begin{equation*}
	S_B(v) + \bigl[ \nabla S_B(v) \bigr]\tr ( x - v ) \le S_B(x),
	\quad
	\forall x \in \RR^N, \;
	\forall v \in \RR^N.
\end{equation*}
Using Proposition \ref{prop:SBltL1}, it follows that
\begin{equation*}
	S_B(v) + \bigl[ \nabla S_B(v) \bigr]\tr ( x - v ) \le \norm{ x }_1,
	\quad
	\forall x \in \RR^N, \;
	\forall v \in \RR^N.
\end{equation*}
Let $ x = (0,\ldots,0,t,0,\ldots,0) $ where $ t $ is in position $ n $.
It follows 
that
\begin{equation}
	\label{eq:suplsb2}
	c(v) + \bigl[ \nabla S_B(v) \bigr]_n \,  t  \le \abs{ t },
	\quad
	\forall t \in \RR, \;
	\forall v \in \RR^N
\end{equation}
where $ c(v) \in \RR $ does not depend on $ t $.
It follows from \eqref{eq:suplsb2} that
$ \abs{ [ \nabla S_B(v) ]_n } \le 1 $.
\end{proof}

The generalized Huber function can be evaluated by taking
the pointwise minimum of numerous simpler functions
(comprising quadratics, absolute values, and linear functions).
This generalizes the situation for the scalar Huber function,
which can be evaluated as the pointwise minimum of three functions,
as expressed in \eqref{eq:smmm} and illustrated in Fig.~\ref{fig:scalar_huber_min}.
Unfortunately, 
evaluating the generalized Huber function on $ \RR^N $
this way requires the evaluation of $ 3^N $ simpler functions,
which is not practical except for small $ N $.
In turn, the evaluation of the GMC penalty is also impractical. 
However, we do not need to explicitly evaluate these functions to utilize them
for sparse regularization, as shown in Sec.~\ref{sec:alg}.
For this paper, 
we compute these functions on $ \RR^2 $ only 
for the purpose of illustration (Figs.~\ref{fig:huber2_rank2_A} and \ref{fig:huber2_rank1_A}).

\section{Generalized MC Penalty}
\label{sec:gmc}

In this section,
we propose a multivariate generalization of the MC penalty \eqref{eq:defMC}.
The basic idea is to generalize \eqref{eq:defsMC}
using the $ \ell_1 $ norm and the generalized Huber function.

\begin{defn}
\label{def:gmcpen}
Let $ B \in \RR^{ M \times N } $.
We define the generalized MC (GMC) penalty function $ \psi_B \colon \RR^N \to \RR $ as
\begin{equation}
	\label{eq:defpsi}
	\psi_B( x ) := \norm{ x }_1 - S_B(x)
\end{equation}
where $ S_B $ is the generalized Huber function \eqref{eq:defS}.
\end{defn}

The GMC penalty reduces to a separable penalty when
$ B\tr \! B $ is diagonal.

\begin{prop}
\label{prop:sBpenD}
Let $ B \in \RR^{ M \times N } $.
If $ B\tr\! B $ is a diagonal matrix, 
then $ \psi_B $ is separable (additive), 
comprising a sum of scalar MC penalties.
Specifically,
\begin{equation}
	\nonumber
	B\tr\! B = \diag( \alpha_1^2, \dots, \alpha_N^2 )
	\ \ \implies \ \
	\psi_B(x) = \sum_n \phi_{\alpha_n} ( x_n )
\end{equation}
where $ \phi_b $ is the scaled MC penalty \eqref{eq:defsMC}.
If $ B\tr \! B = 0 $, then $ \psi_B(x) = \norm{ x }_1 $.
\end{prop}
\begin{proof}
If
$ B\tr\! B = \diag( \alpha_1^2, \dots, \alpha_N^2 ) $,
then by Proposition \ref{prop:BBD} we have
\begin{align*}
	\psi_B(x)
	& = \norm{ x }_1 - \sum_n s_{ \alpha_n } ( x_n )
	\\
	& = \sum_n \, \abs{ x_n } - s_{ \alpha_n } ( x_n )
\end{align*}
which proves the result in light of definition \eqref{eq:defsMC}.
\end{proof}

\begin{figure}[t]
	\begin{minipage}[t]{0.45\textwidth}
	\centering
	\includegraphics{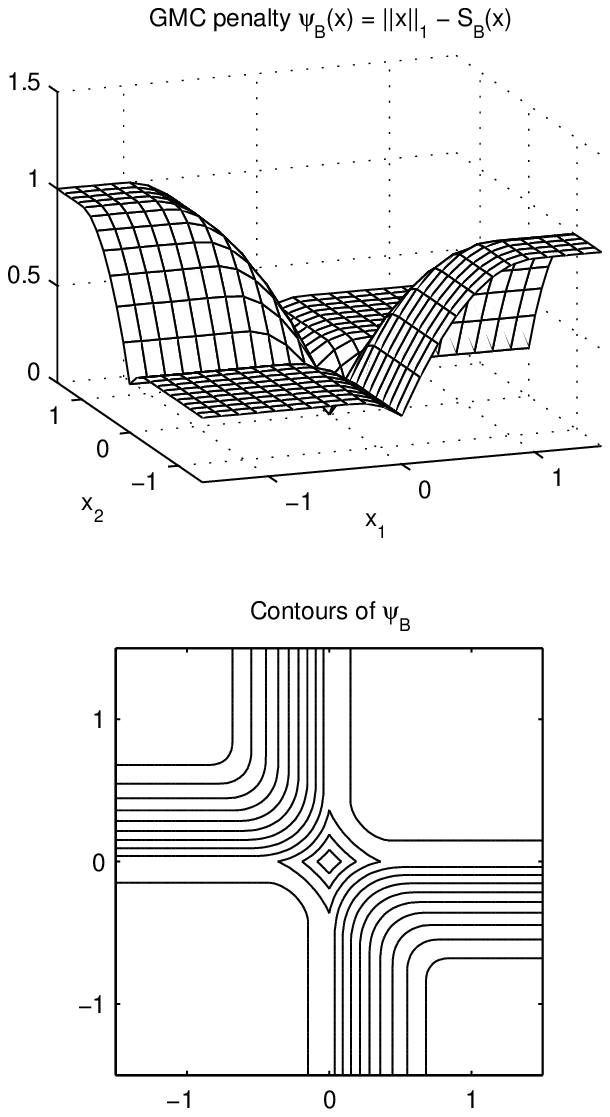}
	\caption{
		The GMC penalty 
		for the matrix $ B $ in \eqref{eq:BA}.
	}
	\label{fig:huber2_rank2_B}
	\end{minipage}
	\hfill
	\begin{minipage}[t]{0.45\textwidth}
	\centering
	\includegraphics{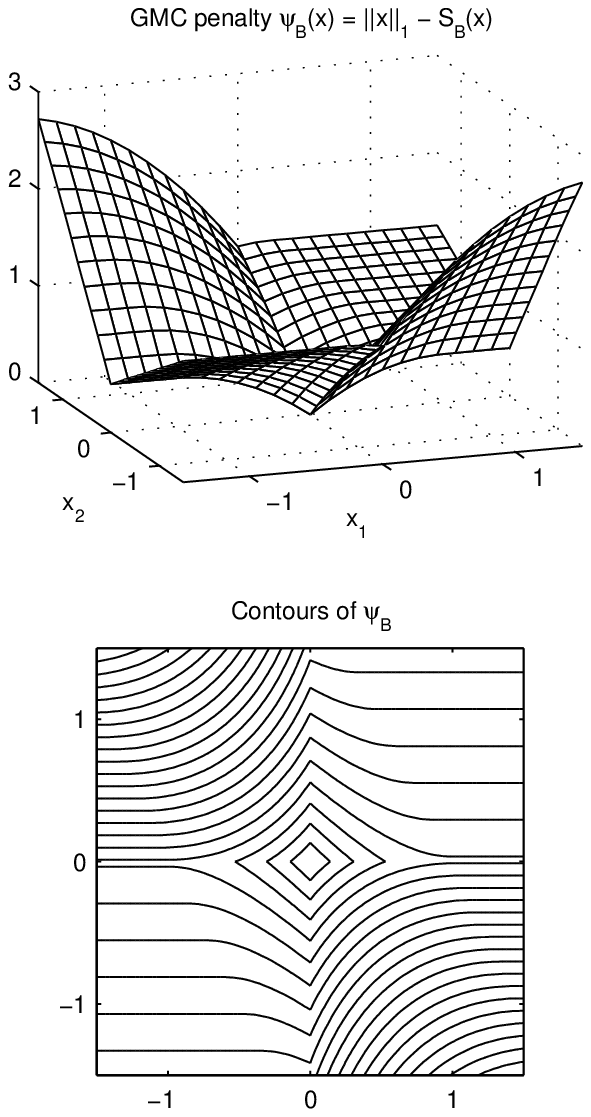}
	\caption{
		The GMC penalty 
		for the matrix $ B $ in \eqref{eq:BB}.
	}
	\label{fig:huber2_rank1_B}
	\end{minipage}
\end{figure}

The most interesting case (the case that motivates the GMC penalty)
is the case where $ B\tr \! B $ is a non-diagonal matrix.
If $ B\tr \! B $ is non-diagonal, 
then 
the GMC penalty is non-separable. 

\begin{example}

For the matrices $ B $ given in 
\eqref{eq:BA}
and 
\eqref{eq:BB},
the GMC penalty is illustrated
in Fig.~\ref{fig:huber2_rank2_B} and Fig.~\ref{fig:huber2_rank1_B},
respectively.

\end{example}

The following corollaries follow directly from Propositions \ref{prop:SBltL1} and \ref{prop:sbnz}.
 
\begin{corr}
\label{corr:MCbnds}
The generalized MC penalty satisfies
\begin{equation}
	0 \le
	\psi_B(x) \le \norm{ x }_1
	\quad
	\text{for all $ x \in \RR^N $.} 
\end{equation}
\end{corr}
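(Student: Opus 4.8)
The plan is to unwind the definition of the GMC penalty and substitute the two-sided bound on $ S_B $ already established in Proposition \ref{prop:SBltL1}; no new estimate is required, since both inequalities are linear rearrangements of that earlier result. Recall from \eqref{eq:defpsi} that $ \psi_B(x) = \norm{ x }_1 - S_B(x) $, so the corollary is equivalent to the sandwich $ 0 \le \norm{ x }_1 - S_B(x) \le \norm{ x }_1 $, which in turn is exactly $ 0 \le S_B(x) \le \norm{ x }_1 $ read backwards.

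First I would treat the upper bound. Since Proposition \ref{prop:SBltL1} gives $ S_B(x) \ge 0 $ for all $ x \in \RR^N $, subtracting a nonnegative quantity from $ \norm{ x }_1 $ can only decrease it, so $ \psi_B(x) = \norm{ x }_1 - S_B(x) \le \norm{ x }_1 $. Next I would treat the lower bound. The same proposition gives $ S_B(x) \le \norm{ x }_1 $, whence $ \psi_B(x) = \norm{ x }_1 - S_B(x) \ge \norm{ x }_1 - \norm{ x }_1 = 0 $. Combining the two displays yields $ 0 \le \psi_B(x) \le \norm{ x }_1 $ for every $ x \in \RR^N $, as claimed.

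I expect there to be no real obstacle here: the content of the corollary is entirely inherited from Proposition \ref{prop:SBltL1}, whose proof already did the substantive work by minimizing the defining infimal-convolution expression and using nonnegativity of its integrand. The only point worth flagging is that the statement cites both Propositions \ref{prop:SBltL1} and \ref{prop:sbnz}, yet the bounds themselves follow from Proposition \ref{prop:SBltL1} alone; Proposition \ref{prop:sbnz}, which identifies $ S_B $ with $ \thalf \norm{ B x }_2^2 $ near the origin, is not needed for the inequalities and is presumably invoked only to locate where $ \psi_B $ attains the quadratic behavior consistent with these bounds. Accordingly, I would write the proof as the two one-line deductions above and not attempt to involve Proposition \ref{prop:sbnz}.
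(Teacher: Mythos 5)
Your proof is correct and matches the paper's intent exactly: the paper gives no separate argument, stating only that the corollary ``follows directly'' from the earlier propositions, and the direct deduction is precisely your two-line rearrangement of the bound $0 \le S_B(x) \le \norm{x}_1$ from Proposition \ref{prop:SBltL1}. Your side observation is also right --- Proposition \ref{prop:sbnz} is cited in that sentence only because it serves the companion Corollary \ref{corr:mcnz}, not this one.
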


\begin{corr}
\label{corr:mcnz}
Given $ B \in \RR^{ M \times N } $,
the generalized MC penalty
satisfies
\begin{equation}
	\psi_B(x) = \norm{ x }_1 - \thalf \norm{ B x }_2^2
	\ \
	\text{for all} \ \ 
	\norm{ B\tr\! B x }_\infty \le 1.
\end{equation}
\end{corr}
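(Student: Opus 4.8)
The final statement to prove is Corollary~\ref{corr:mcnz}, which asserts that $\psi_B(x) = \norm{x}_1 - \thalf\norm{Bx}_2^2$ whenever $\norm{B\tr\!Bx}_\infty \le 1$.

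\bigskip

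The plan is to read off the result directly from the definition of the GMC penalty together with the local quadratic behavior of the generalized Huber function already established in Proposition~\ref{prop:sbnz}. By Definition~\ref{def:gmcpen}, we have $\psi_B(x) = \norm{x}_1 - S_B(x)$ for every $x \in \RR^N$, with no restriction on $x$. The only nonconstant ingredient on the right-hand side is the generalized Huber function $S_B$, so the entire claim reduces to controlling $S_B$ on the region $\{x : \norm{B\tr\!Bx}_\infty \le 1\}$.

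\bigskip

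First I would invoke Proposition~\ref{prop:sbnz}, which states precisely that $S_B(x) = \thalf\norm{Bx}_2^2$ for all $x$ satisfying $\norm{B\tr\!Bx}_\infty \le 1$. This is exactly the hypothesis of the corollary, so it applies verbatim. Substituting this expression for $S_B(x)$ into the definition $\psi_B(x) = \norm{x}_1 - S_B(x)$ then yields
\begin{equation*}
	\psi_B(x) = \norm{x}_1 - \thalf\norm{Bx}_2^2
	\quad\text{for all } x \text{ with } \norm{B\tr\!Bx}_\infty \le 1,
\end{equation*}
which is the desired identity. The entire argument is a one-line substitution.

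\bigskip

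I do not anticipate any genuine obstacle here, since all the analytic work has already been carried out in proving Proposition~\ref{prop:sbnz} (the subdifferential computation showing that $v = 0$ achieves the infimum defining $S_B$ exactly when $\norm{B\tr\!Bx}_\infty \le 1$). The corollary is a direct consequence, which is why the paper labels it a corollary rather than a proposition; the only thing to check is that the region of validity stated in Proposition~\ref{prop:sbnz} matches the region stated in the corollary, and indeed the two conditions $\norm{B\tr\!Bx}_\infty \le 1$ coincide. Thus the proof amounts to citing Proposition~\ref{prop:sbnz} and substituting into Definition~\ref{def:gmcpen}.
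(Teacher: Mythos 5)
Your proposal is correct and matches the paper exactly: the paper states that Corollary~\ref{corr:mcnz} follows directly from Proposition~\ref{prop:sbnz}, i.e., by substituting $S_B(x) = \thalf\norm{Bx}_2^2$ (valid on the region $\norm{B\tr\! B x}_\infty \le 1$) into the definition $\psi_B(x) = \norm{x}_1 - S_B(x)$. Nothing further is needed.
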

The corollaries imply
that around zero the generalized MC penalty approximates the $ \ell_1 $ norm (from below),
i.e., $ \psi_B(x) \approx \norm{ x }_1 $ for $ x \approx 0 $.

The generalized MC penalty has a basic property expected of a regularization function;
namely, that large values are penalized more than (or the same as) small values. 
Specifically, 
if $ v , x \in \RR^N $ with
$ \abs{ v_i } \ge \abs{ x_i } $
and $ \sign{ v_i } = \sign{ x_i } $
for $ i = 1,\ldots,N $, then
$ \psi_B(v) \ge \psi_B(x) $.
That is, in any given quadrant, the function $ \psi_B(x) $ 
is 
a non-decreasing
function in each $ \abs{ x_i } $.
This is formalized in the following proposition,
and illustrated in
Figs.~\ref{fig:huber2_rank2_B} and \ref{fig:huber2_rank1_B}.
Basically, the gradient of $ \psi_B $ points away 
from the origin.

\begin{prop}
\label{prop:increasing}
Let $ x \in \RR^N $ with $ x_i \neq 0 $.
The generalized MC penalty $ \psi_B $ has the property that 
$  [ \nabla \psi_B (x) ]_i $ either has the same sign as $ x_i $ or is equal to zero.
\end{prop}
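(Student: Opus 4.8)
The plan is to exploit the decomposition $ \psi_B = \norm{ \adot }_1 - S_B $ from Definition~\ref{def:gmcpen} and reduce everything to the gradient bound of Lemma~\ref{gradbnd}. First I would note that, because we assume $ x_i \neq 0 $, the $ i $-th partial derivative of $ \norm{ \adot }_1 $ exists at $ x $ and equals $ \sign(x_i) $ (the ordinary $ \pm 1 $ signum), irrespective of whether the other coordinates of $ x $ vanish; the non-smoothness of the $ \ell_1 $ norm in the remaining coordinates does not affect its derivative in the $ i $-th direction. Since $ S_B $ is differentiable everywhere (shown earlier), its $ i $-th partial derivative $ [ \nabla S_B(x) ]_i $ also exists. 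Hence the $ i $-th partial derivative of $ \psi_B $ at $ x $ exists and is given by
\begin{equation*}
	[ \nabla \psi_B(x) ]_i = \sign(x_i) - [ \nabla S_B(x) ]_i .
\end{equation*}

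Next I would invoke Lemma~\ref{gradbnd}, which guarantees $ \norm{ \nabla S_B(x) }_\infty \le 1 $ and in particular $ \abs{ [ \nabla S_B(x) ]_i } \le 1 $. The conclusion then follows from a simple case analysis on the sign of $ x_i $. If $ x_i > 0 $, then $ \sign(x_i) = 1 $ and $ [ \nabla \psi_B(x) ]_i = 1 - [ \nabla S_B(x) ]_i \ge 0 $, so this component is either positive (matching the sign of $ x_i $) or zero. If $ x_i < 0 $, then $ \sign(x_i) = -1 $ and $ [ \nabla \psi_B(x) ]_i = -1 - [ \nabla S_B(x) ]_i \le 0 $, so this component is either negative (again matching $ x_i $) or zero. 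In both cases $ [ \nabla \psi_B(x) ]_i $ has the same sign as $ x_i $ or vanishes, which is exactly the claim.

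I do not expect any serious obstacle here, since the result is essentially immediate once Lemma~\ref{gradbnd} is in hand; the only point requiring a little care is the differentiability bookkeeping. Specifically, the full gradient $ \nabla \psi_B(x) $ need not exist when some coordinate other than $ x_i $ is zero, because $ \norm{ \adot }_1 $ is then non-differentiable there, so I would phrase the argument entirely in terms of the single $ i $-th partial derivative, which is well defined under the hypothesis $ x_i \neq 0 $. This sidesteps the issue while keeping the statement meaningful. The geometric content stated before the proposition — that the gradient of $ \psi_B $ points away from the origin, so larger magnitudes are penalized at least as heavily — is then an immediate consequence of this componentwise sign property.
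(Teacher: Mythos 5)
Your proof is correct and follows essentially the same route as the paper: decompose $ \psi_B(x) = \norm{x}_1 - S_B(x) $, compute $ \partial \psi_B(x)/\partial x_i = \sign(x_i) - \partial S_B(x)/\partial x_i $, and conclude via the bound $ \abs{ \partial S_B(x)/\partial x_i } \le 1 $ from Lemma~\ref{gradbnd} with a sign case analysis. Your extra remark about phrasing the argument in terms of the single $i$-th partial derivative (since $ \nabla \psi_B $ need not exist when other coordinates vanish) is a careful refinement the paper leaves implicit, but it does not change the argument.
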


\begin{proof}
Let $ x \in \RR^N $ with $ x_i \neq 0 $.
Then, from the definition of the MC penalty,
\begin{equation*}
	\frac{ \partial \psi_B }{ \partial {x_i} }( x ) =
	\sign(x_i) - 
	\frac{ \partial S_B }{ \partial {x_i} }( x ).
\end{equation*}
From Lemma \ref{gradbnd},
$ \abs{ \partial S_B(x) / \partial x_i } \le 1 $.
Hence 
$ \partial \psi_B(x) / \partial x_i \ge 0 $ when $ x_i > 0 $,
and
$ \partial \psi_B(x) / \partial x_i \le 0 $ when $ x_i < 0 $.
\end{proof}
A penalty function not satisfying Proposition \ref{prop:increasing}
would not be considered an effective sparsity-inducing regularizer.

\section{Sparse Regularization}
\label{sec:spreg}

In this section, we consider how to set the GMC penalty
to maintain the convexity of the regularized least square cost function.
To that end, 
the condition \eqref{eq:BltA} below generalizes the scalar convexity condition \eqref{eq:scc}.

\begin{theorem}
\label{thm:cc}
Let
$ y \in \RR^M $,
$ A \in \RR^{M \times N} $,
and
$ \lam > 0 $.
 Define $ F \colon \RR^N \to \RR $ as
\begin{equation}
	\label{eq:defFx}
	F(x) = \half \norm{ y - A x }_2^2 + \lam \, \psi_B(x)
\end{equation}
where
$ \psi_B \colon \RR^N \to \RR $ is
the generalized MC penalty \eqref{eq:defpsi}.
If 
\begin{equation}
	\label{eq:BltA}
	B\tr\! B \mle  \frac{ 1 }{ \lam } A\tr\! A 
\end{equation}
then $ F $ is a convex function. 

\end{theorem}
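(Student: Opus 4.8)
I want to show $F$ is convex under the condition $B\tr B \mle \frac{1}{\lam} A\tr A$. The strategy mirrors the scalar proof of Proposition~\ref{prop:sclar_convex} exactly. First I write out $F$ using the definition $\psi_B(x) = \norm{x}_1 - S_B(x)$ of the GMC penalty, which gives
$$
F(x) = \thalf \norm{y - Ax}_2^2 + \lam \norm{x}_1 - \lam S_B(x).
$$
I then split this as $F(x) = G(x) + \lam \norm{x}_1$, where $G(x) := \thalf \norm{y - Ax}_2^2 - \lam S_B(x)$. Since $\norm{\adot}_1$ is convex and the sum of convex functions is convex, it suffices to prove $G$ is convex.

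**Rewriting $S_B$ as a max.** Next I use the exact infimal-convolution representation~\eqref{eq:defSm}, namely $S_B(x) = \min_{v} \{ \norm{v}_1 + \thalf \norm{B(x-v)}_2^2 \}$. Subtracting a minimum turns into a maximum, so
$$
G(x) = \max_{v \in \RR^N} \Bigl\{ \thalf \norm{y - Ax}_2^2 - \lam \norm{v}_1 - \thalf \lam \norm{B(x-v)}_2^2 \Bigr\}.
$$
The crucial algebraic step is to isolate the part of this expression that is quadratic in $x$ and separate off the terms that are affine in $x$ (for fixed $v$). Expanding $\norm{y-Ax}_2^2$ and $\norm{B(x-v)}_2^2$, the purely quadratic-in-$x$ contribution is $\thalf x\tr A\tr A\, x - \thalf \lam\, x\tr B\tr B\, x = \thalf\, x\tr (A\tr A - \lam B\tr B)\, x$, and this piece does not depend on $v$, so it can be pulled outside the max. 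What remains inside the max is affine in $x$ (the cross terms $-x\tr A\tr y$ and $+\lam\, x\tr B\tr B\, v$ are linear in $x$, and the terms $\thalf\norm{y}_2^2$, $-\lam\norm{v}_1$, $-\thalf\lam v\tr B\tr B v$ are constant in $x$). Hence I can write
$$
G(x) = \thalf\, x\tr \bigl( A\tr A - \lam B\tr B \bigr)\, x + \max_{v \in \RR^N} \{ \text{affine in } x \}.
$$

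**Finishing.** The pointwise maximum of a family of affine (hence convex) functions is convex, so the second term is convex in $x$ regardless of the condition on $B$. For the first term, $\thalf\, x\tr (A\tr A - \lam B\tr B)\, x$ is a quadratic form that is convex if and only if its matrix is positive semidefinite, i.e. $A\tr A - \lam B\tr B \mge 0$, which is precisely $\lam B\tr B \mle A\tr A$, equivalently the hypothesis $B\tr B \mle \frac{1}{\lam} A\tr A$ after dividing by $\lam > 0$. Adding the two convex pieces shows $G$ is convex, and therefore $F = G + \lam\norm{\adot}_1$ is convex as well.

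**Anticipated obstacle.** The only delicate point is the legitimacy of converting $\min_v$ into $\max_v$ and pulling the $v$-independent quadratic form outside the maximization; this relies on the infimal convolution being \emph{exact} (the minimum is attained), which is guaranteed by Proposition~\ref{prop:lsc}. One should verify carefully that after completing the expansion the $x$-quadratic term genuinely carries no dependence on $v$ — this is the multivariate analogue of the scalar identity where $\thalf(a^2 - \lam b^2) x^2$ factored out, and it is what allows the convexity condition to reduce to a clean semidefinite inequality rather than something coupling $x$ and $v$.
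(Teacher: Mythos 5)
Your proof is correct and follows essentially the same route as the paper's: both decompose $F$ via $\psi_B = \norm{\adot}_1 - S_B$, convert the exact infimal convolution into a pointwise maximum, extract the $v$-independent quadratic form $\thalf x\tr (A\tr A - \lam B\tr B) x$, and invoke convexity of a pointwise maximum of functions affine in $x$. The only cosmetic difference is that you split off $\lam \norm{x}_1$ at the start (as in the scalar Proposition~\ref{prop:sclar_convex}) whereas the paper carries it inside the maximization before extracting it, which changes nothing of substance.
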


\begin{proof}
Write $ F $ as
\begin{align*}
	F(x) 
	& = 
	\thalf \norm{ y - A x }_2^2 + \lam \, \bigl(  \norm{ x }_1 - S_B(x) \bigr)
	\\
	& = 
	\thalf \norm{ y - A x }_2^2 + \lam \, \norm{ x }_1 
	- 
	\min_{ v \in \RR^N }
	\big\{
		\lam \norm{ v }_1 + \tfrac{ \lam }{ 2 } \norm{ B ( x - v) }_2^2
	\big\}
	\\
	& = 
	\max_{ v \in \RR^N }
	\big\{
		\thalf \norm{ y - A x }_2^2 + \lam \, \norm{ x }_1
		- 
		\lam \norm{ v }_1
		- \tfrac{ \lam }{ 2 } \norm{ B ( x - v) }_2^2
	\big\}
	\\
	\nonumber
	& = 
	\max_{ v \in \RR^N }
	\big\{
		\thalf x\tr\! \bigl( A\tr\! A - \lam B\tr\! B \bigr) x  + \lam \norm{ x }_1 
		+ g(x, v)
	\big\}
	\\
	& = 
	\thalf x\tr\! \bigl( A\tr\! A - \lam B\tr\! B \bigr) x  + \lam \norm{ x }_1 
	+
	\max_{ v \in \RR^N }
	g(x, v)
\end{align*}
where $ g $ is affine in $ x $.
The last term is convex as it is the pointwise maximum of a set of convex functions
(Proposition 8.14 in \cite{Bauschke_2011}).
Hence, $ F $ is convex if $ A\tr\! A - \lam B\tr\! B $ is positive semidefinite.
\end{proof}

The convexity condition \eqref{eq:BltA} is easily satisfied. 
Given $ A $, we may simply set
\begin{equation}
	\label{eq:setB}
	B = \sqrt{ \gamma / \lam \, } A,
	\quad
	0 \le \gamma \le 1. 
\end{equation}
Then
$ B\tr\! B = ( \gamma / \lam ) A\tr\! A $
which satisfies \eqref{eq:BltA} when $ \gamma \le 1 $.
The parameter $ \gamma $ controls the non-convexity of the penalty $ \psi_B $.
If $ \gamma = 0 $, then $ B = 0 $ and the penalty reduces to the $ \ell_1 $ norm.
If $ \gamma = 1 $, then \eqref{eq:BltA} is satisfied with equality
and the penalty is `maximally' non-convex. 
In practice, we use a nominal range of $ 0.5 \le \gamma \le 0.8 $.

\bigskip

When $ A\tr \! A $ is diagonal, 
the proposed methodology
reduces to element-wise firm thresholding. 

\begin{prop}
Let
$ y \in \RR^M $,
$ A \in \RR^{M \times N} $,
and
$ \lam > 0 $.
If $ A\tr \! A $ is diagonal with positive diagonal entries
and
 $ B $ is given by \eqref{eq:setB},
then the minimizer of the 
cost function $ F $ in \eqref{eq:defFx}
is given by element-wise firm thresholding.
Specifically, if
\begin{equation}
	A\tr \! A = \diag( \alpha_1^2, \dots, \alpha_N^2 ),
\end{equation}
then
\begin{equation}
	\label{eq:xsopt}
	x\opt_n = \firm( [A\tr\! y]_n / \alpha_n^2 ; \lam / \alpha_n^2 , \lam / (\gamma \alpha_n^2 ) )
\end{equation}
when $ 0 < \gamma \le 1 $,
and
\begin{equation}
	x\opt_n = \soft( [A\tr y]_n / \alpha_n^2 ; \lam / \alpha_n^2) 
\end{equation}
when $ \gamma = 0 $.

\end{prop}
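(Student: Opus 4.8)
The plan is to exploit the fact that when $A\tr A$ is diagonal, both the data-fidelity term and the GMC penalty decouple across coordinates, so that minimizing $F$ in \eqref{eq:defFx} reduces to $N$ independent scalar problems, each of which is settled by the scalar result already established in Proposition \ref{prop:firm}. So the whole argument is to reduce to the scalar case and invoke that proposition coordinatewise.

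First I would compute $B\tr B$ from the prescription \eqref{eq:setB}: since $B = \sqrt{\gamma/\lam}\,A$, we get $B\tr B = (\gamma/\lam)\,A\tr A = \diag\bigl((\gamma/\lam)\alpha_1^2, \dots, (\gamma/\lam)\alpha_N^2\bigr)$, which is diagonal. By Proposition \ref{prop:sBpenD} this makes the penalty separable, $\psi_B(x) = \sum_n \phi_{\beta_n}(x_n)$ with $\beta_n^2 = (\gamma/\lam)\alpha_n^2$ (and $\psi_B = \norm{\adot}_1$ in the degenerate case $\gamma = 0$). Next, because $A\tr A$ is diagonal, the quadratic term splits as $\half\norm{y - Ax}_2^2 = \half\norm{y}_2^2 + \sum_n\bigl(\half\alpha_n^2 x_n^2 - [A\tr y]_n x_n\bigr)$, so that $F(x)$ equals a constant plus $\sum_n f_n(x_n)$ and may be minimized one coordinate at a time. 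Completing the square in each scalar piece gives $f_n(x_n) = \half\alpha_n^2\bigl(x_n - [A\tr y]_n/\alpha_n^2\bigr)^2 + \lam\,\phi_{\beta_n}(x_n)$ up to an additive constant, which is exactly the scalar cost \eqref{eq:scprop_f} with $a = \alpha_n$, effective observation $y = [A\tr y]_n/\alpha_n$ (so that $y/a = [A\tr y]_n/\alpha_n^2$), and $b = \beta_n$. The convexity hypothesis $b^2 \le a^2/\lam$ of Proposition \ref{prop:firm} then reads $(\gamma/\lam)\alpha_n^2 \le \alpha_n^2/\lam$, i.e.\ $\gamma \le 1$, which holds; applying that proposition yields $x\opt_n = \firm\bigl([A\tr y]_n/\alpha_n^2;\, \lam/\alpha_n^2,\, 1/\beta_n^2\bigr)$, and substituting $1/\beta_n^2 = \lam/(\gamma\alpha_n^2)$ recovers \eqref{eq:xsopt} for $0 < \gamma \le 1$. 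For $\gamma = 0$ the penalty is the $\ell_1$ norm, and each scalar problem $\min_{x_n}\{\half\alpha_n^2(x_n - [A\tr y]_n/\alpha_n^2)^2 + \lam\abs{x_n}\}$ is solved by soft thresholding with threshold $\lam/\alpha_n^2$, giving the stated formula.

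The only real obstacle is the bookkeeping: one must track three distinct scalings — the diagonal entries $\alpha_n$ of $A\tr A$, the penalty scale $\beta_n = \sqrt{\gamma/\lam}\,\alpha_n$ induced by \eqref{eq:setB}, and the threshold and endpoint parameters $\lam/\alpha_n^2$ and $\lam/(\gamma\alpha_n^2)$ — and verify they line up with the arguments of $\firm$ in Proposition \ref{prop:firm}. There is no conceptual difficulty once separability is observed; positivity of the $\alpha_n^2$ guarantees $a = \alpha_n > 0$ so that Proposition \ref{prop:firm} applies in each coordinate, and the limiting behavior $\mu = \lam/(\gamma\alpha_n^2) \to \infty$ as $\gamma \to 0^+$ is consistent with the separate soft-thresholding formula recorded at $\gamma = 0$.
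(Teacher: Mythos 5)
Your proposal is correct and follows essentially the same route as the paper's own proof: compute $B\tr B = (\gamma/\lam)\,A\tr\! A$, invoke Proposition \ref{prop:sBpenD} to separate the penalty into scaled MC terms with parameter $\alpha_n\sqrt{\gamma/\lam}$, complete the square so the quadratic term decouples coordinatewise, and apply Proposition \ref{prop:firm} (equation \eqref{eq:sfirm}) in each coordinate. Your parameter bookkeeping ($a = \alpha_n$, effective observation $[A\tr\! y]_n/\alpha_n$, $1/b^2 = \lam/(\gamma\alpha_n^2)$) matches the paper exactly, including the separate $\gamma = 0$ soft-thresholding case.
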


\begin{proof}
If $ A\tr \! A = \diag( \alpha_1^2, \dots, \alpha_N^2 ) $, then
\begin{align*}
	\thalf \norm{ y - A x }_2^2
	& = \thalf y\tr \! y
	- x\tr \! A\tr y + \thalf x\tr \! A\tr \! A x
	\\
	& =
	\thalf y\tr \! y + \sum_n \bigr( -x_n [A\tr \! y]_n + \thalf \alpha_n^2 x_n^2  \bigl)
	\\
	& =
	\sum_n \thalf \bigl(  [ A\tr\! y]_n / \alpha_n  - \alpha_n x_n \bigr)^2 + C
\end{align*}
where $ C $ does not depend on $ x $.
If $ B $ is given by \eqref{eq:setB}, then
\begin{equation*}
	B\tr \! B = ( \gamma / \lam ) \, \diag( \alpha_1^2, \dots, \alpha_N^2 ).
\end{equation*}
Using Proposition \ref{prop:sBpenD}, we have
\begin{equation*}
	\psi_B(x) = \sum_n \phi_{\alpha_n \sqrt{ \gamma / \lam } } ( x_n ).
\end{equation*}
Hence, $ F $ in \eqref{eq:defFx} is given by
\begin{equation*}
	F(x) = 
	\sum_n
	\left[
	 \thalf \bigl(  [ A\tr\! y]_n / \alpha_n  - \alpha_n x_n \bigr)^2 
	 	+
		\lam
		 \phi_{\alpha_n \sqrt{ \gamma / \lam } } ( x_n )
	 \right] + C
\end{equation*}
and so \eqref{eq:xsopt} follows from \eqref{eq:sfirm}.
\end{proof}

\section{Optimization Algorithm}
\label{sec:alg}

Even though the GMC penalty does not have a simple explicit formula, 
a global minimizer of the sparse-regularized cost function \eqref{eq:defFx}
can be readily calculated using proximal algorithms.
It is not necessary to explicitly evaluate the GMC penalty or its gradient.

To use proximal algorithms to minimize the cost function  $ F $ in \eqref{eq:defFx} 
when $ B $ satisfies \eqref{eq:BltA}, 
we rewrite it as a saddle-point problem:
\begin{equation}
	( x\opt, \, v\opt )
	=
	\arg
	\min_{ x \in \RR^N }
	\max_{ v \in \RR^N }
	F(x, v) 
\end{equation}
where
\begin{equation}
	F(x, v) = 
		\half \norm{ y - A x }_2^2 + \lam \, \norm{ x }_1 - 
		\lam \norm{ v }_1
		- \frac{ \lam }{ 2 } \norm{ B ( x - v) }_2^2
\end{equation}

If we use \eqref{eq:setB} with $ 0 \le \gamma \le 1 $,
then the saddle function is given by
\begin{equation}
	\label{eq:defFxv}
	F(x, v) = 
		\half \norm{ y - A x }_2^2 + \lam \, \norm{ x }_1 - 
		\lam \norm{ v }_1
		- \frac{ \gamma }{ 2 } \norm{ A ( x - v) }_2^2.
\end{equation}

These saddle-point problems are instances of monotone inclusion problems.
Hence, the solution can be obtained using the forward-backward (FB) algorithm for such a problems;
see Theorem 25.8 of Ref.~\cite{Bauschke_2011}.
The FB algorithm involves only simple computational steps (soft-thresholding and the operators $ A $ and $ A\tr $).

\begin{prop}
\label{prop:alg}
Let
$ \lam > 0 $ and  $ 0 \le \gamma < 1 $.
Let 
$ y \in \RR^N $ and $ A \in \RR^{M \times N} $.
Then a saddle-point $ ( x\opt, \, v\opt ) $ of $ F $ in  \eqref{eq:defFxv} can be obtained by the iterative algorithm:
\begin{align*}
	&
	\text{Set }
	\rho = \max\{ 1, \, \gamma/(1-\gamma) \} \, \norm{ A\tr\! A }_2
	\\
	&
	\text{Set } 	\mu : 0 < \mu < 2 / \rho
	\\
	& \text{For $ i = 0, 1, 2, \dots $}
	\\
	& \quad
	w\iter{i} = 
	x\iter{i} - \mu A\tr\! \bigl( A( x\iter{i} + \gamma( v\iter{i} - x\iter{i} )) - y \bigr)
	\\
	& \quad
	u\iter{i} = v\iter{i} - \mu \gamma  A\tr\! A( v\iter{i} - x\iter{i} ) 
	\\
	& \quad
	x\iter{i+1} = \soft( w\iter{i}, \, \mu \lam)
	\\
	& \quad
	v\iter{ i + 1} = \soft( u\iter{i}, \, \mu \lam)
	\\
	& \text{end}
\end{align*}
where $ i $ is the iteration counter.
\end{prop}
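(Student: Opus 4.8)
The plan is to recast the saddle-point problem as a monotone inclusion and then recognize the stated recursion as forward--backward (FB) splitting, whose convergence is supplied by Theorem~25.8 of \cite{Bauschke_2011}. First I would record that, for $ 0 \le \gamma < 1 $, the function $ F(\adot,\adot) $ in \eqref{eq:defFxv} is convex in $ x $ and concave in $ v $: the smooth part $ h(x,v) = \half \norm{ y - A x }_2^2 - \tfrac{\gamma}{2} \norm{ A(x-v) }_2^2 $ has $ x $-Hessian $ (1-\gamma) A\tr\! A \mge 0 $ and $ v $-Hessian $ -\gamma A\tr\! A \mle 0 $. Hence a saddle point $ (x\opt, v\opt) $ is characterized by $ 0 \in \partial_x F $ and $ 0 \in \partial_v(-F) $. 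Stacking $ z = (x,v) \in \RR^{2N} $, these two conditions read $ 0 \in (A_{\mathrm{op}} + B_{\mathrm{op}})(z) $, where $ A_{\mathrm{op}}(z) = \lam \bigl( \partial \norm{ \adot }_1(x),\, \partial \norm{ \adot }_1(v) \bigr) $ is the subdifferential of the proper lsc convex map $ z \mapsto \lam \norm{ x }_1 + \lam \norm{ v }_1 $ (so $ A_{\mathrm{op}} $ is maximally monotone), while $ B_{\mathrm{op}}(z) = \bigl( \nabla_x h,\, -\nabla_v h \bigr) $ is the (affine) saddle-gradient of $ h $.

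Next I would make $ B_{\mathrm{op}} $ explicit. Direct differentiation gives $ B_{\mathrm{op}}(z) = Q z - (A\tr y,\, 0) $ with $ Q = \bigl[ \begin{smallmatrix} (1-\gamma) A\tr\! A & \gamma A\tr\! A \\ -\gamma A\tr\! A & \gamma A\tr\! A \end{smallmatrix} \bigr] $. I would then check that the forward step $ z \mapsto z - \mu B_{\mathrm{op}}(z) $ reproduces exactly the intermediate variables $ (w\iter{i}, u\iter{i}) $ of the algorithm, and that the backward step $ J_{\mu A_{\mathrm{op}}} = (I + \mu A_{\mathrm{op}})\inv $ decouples across the two blocks into $ \prox_{\mu \lam \norm{ \adot }_1} = \soft(\adot;\mu\lam) $. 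Together these identify the recursion with the FB iteration $ z\iter{i+1} = J_{\mu A_{\mathrm{op}}}\bigl( z\iter{i} - \mu B_{\mathrm{op}}(z\iter{i}) \bigr) $.

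The main obstacle is the hypothesis of Theorem~25.8 that $ B_{\mathrm{op}} $ be $ \beta $-cocoercive, which controls the admissible step sizes through $ 0 < \mu < 2\beta $; I must show $ \beta = 1/\rho $ is a valid constant. Since the skew part of $ Q $ drops out of the inner product, one gets $ z\tr Q z = (1-\gamma) \norm{ A p }_2^2 + \gamma \norm{ A q }_2^2 $ for $ z = (p,q) $, so the required estimate $ \norm{ Q z }_2^2 \le \rho \, z\tr Q z $ is a quadratic-form inequality. Diagonalizing $ A\tr\! A $ decouples it over eigenvectors, reducing it to a scalar bound for each eigenvalue $ \sigma \le \norm{ A\tr\! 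A }_2 $; bounding $ \sigma $ by $ \norm{ A\tr\! A }_2 $, it suffices to verify the elementary two-variable inequality $ \bigl( (1-\gamma) a + \gamma b \bigr)^2 + \gamma^2 (b-a)^2 \le \max\{ 1,\, \gamma/(1-\gamma) \} \bigl( (1-\gamma) a^2 + \gamma b^2 \bigr) $, whose sharp constant $ \max\{ 1,\, \gamma/(1-\gamma) \} $ is precisely the source of the factor in $ \rho = \max\{ 1,\, \gamma/(1-\gamma) \} \norm{ A\tr\! A }_2 $. Finally I would note that $ \mathrm{zer}(A_{\mathrm{op}} + B_{\mathrm{op}}) $ is nonempty---it is the set of saddle points of the convex--concave $ F $, equivalently the minimizers of the convex cost of Theorem~\ref{thm:cc}---and then invoke Theorem~25.8 to conclude that $ (x\iter{i}, v\iter{i}) $ converges to such a saddle point.
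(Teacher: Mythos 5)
Your proposal is correct and follows essentially the same route as the paper: the same splitting of the saddle-point optimality condition into the affine operator $P$ (your $B_{\mathrm{op}}$, with identical matrix) plus the maximally monotone sign/subdifferential part, the same identification of the stated recursion with forward--backward splitting, and the same appeal to Theorem~25.8 of the Bauschke--Combettes reference with cocoercivity constant $\beta = 1/\rho$. The only cosmetic difference is in verifying cocoercivity: the paper checks $\thalf P + \thalf P\tr - \beta P\tr P \mge 0$ via a Kronecker-product factorization with $\beta = \beta_1\beta_2$, while you diagonalize $A\tr\! A$ and reduce to the equivalent two-variable inequality---the same $2\times 2$ positive-semidefiniteness condition in different clothing---and you additionally make explicit two points the paper leaves implicit (the convex--concave structure justifying the saddle-point characterization, and nonemptiness of $\mathrm{zer}(P+Q)$).
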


\begin{proof}
The point $ ( x\opt , v\opt ) $ is a saddle-point of $ F $ if
$  0 \in \partial F( x\opt, v\opt ) $
where $ \partial F $ is the subdifferential of $ F $.
From \eqref{eq:defFxv}, 
we have
\begin{align*}
	\partial_x F(x, v) 
	& = 
	A\tr\! ( A x - y ) - \gamma A\tr\! A ( x - v ) + \lam \sign( x )
	\\
	\partial_v F(x, v) 
	& = 
	\gamma A\tr\! A ( x - v ) - \lam \sign( v ).
\end{align*}
Hence, $ 0 \in \partial F $ if $ 0 \in P(x, v) + Q(x, v) $ 
where
\begin{align*}
	P( x, v ) & = 
	\begin{bmatrix}
		(1 - \gamma) A\tr\! A 
		&
		\gamma A\tr\! A
		\\
		- \gamma A\tr\! A
		&
		\gamma A\tr\! A
	\end{bmatrix}
	\begin{bmatrix}
		x
		\\
		v
	\end{bmatrix}
	-
	\begin{bmatrix}
		A\tr y
		\\
		0
	\end{bmatrix}
	\\
	Q( x, v ) & = 
	\begin{bmatrix}
		\lam \sign( x )
		\\
		\lam \sign( v )
	\end{bmatrix}.
\end{align*}
Finding $ (x, v) $ such that $ 0 \in P(x, v) + Q(x, v) $ 
is the problem of constructing a zero of a sum of operators. 
The operators $ P $ and $ Q $ are maximally monotone
and $ P $ is single-valued 
and $ \beta $-cocoercive with $ \beta > 0 $;
hence, 
the forward-backward algorithm (Theorem 25.8 in \cite{Bauschke_2011}) can be used.
In the current notation, the forward-backward algorithm is
\begin{align*}
	&
	\begin{bmatrix}
		w\iter{i} \\ u\iter{i}
	\end{bmatrix}
	=
	\begin{bmatrix}
		x\iter{i} \\ v\iter{i}
	\end{bmatrix}
	 - \mu P ( x\iter{i}, \, v\iter{i} )
	\\
	& 
	\begin{bmatrix}
		x\iter{ i + 1 } \\ v\iter{ i + 1 }
	\end{bmatrix}
	= J_{ \mu Q } ( w\iter{i}, \, u\iter{i} )
\end{align*}
where $ J_{ Q } = (I + Q)\inv $ is the \emph{resolvent} of $ Q $.
The resolvent of the $ \sign $ function is soft thresholding.
The constant $ \mu $ should be chosen
$ 0 < \mu < 2 \beta $ where $ P $ is $ \beta $-cocoercive (Definition 4.4 in \cite{Bauschke_2011}),
i.e., $ \beta P $ is firmly non-expansive. 
We now address the value $ \beta $.
By Corollary 4.3(v) in \cite{Bauschke_2011}, this condition is equivalent to
\begin{equation}
	\label{eq:Pcond}
	\thalf P + \thalf P\tr - \beta P\tr P \mge 0.
\end{equation}
We may write $ P $ using a Kronecker product,
\begin{equation*}
	P = 
	\begin{bmatrix}
		1-\gamma 	& 	\gamma
		\\
		-\gamma		& 	\gamma
	\end{bmatrix}
	\otimes
	A\tr\! A.
\end{equation*}
Then we have
\begin{align*}
	& \thalf P + \thalf P\tr - \beta P\tr P 
	\\
	&
	=
	\begin{bmatrix}
		1 - \gamma 	& 	0
		\\
		0		& 		\gamma
	\end{bmatrix}
	\otimes
	A\tr\! A
	-
	\beta
	\begin{bmatrix}
		1-\gamma 	& 	-\gamma
		\\
		\gamma		& 	\gamma
	\end{bmatrix}
	\begin{bmatrix}
		1-\gamma 	& 	\gamma
		\\
		-\gamma		& 	\gamma
	\end{bmatrix}
	\otimes
	(A\tr\! A)^2
	\\
	& 
	=
	\left(
	\left(
	\begin{bmatrix}
		1 - \gamma 	& 	0
		\\
		0		& 		\gamma
	\end{bmatrix}
	-
	\beta_1
	\begin{bmatrix}
		1-\gamma 	& 	-\gamma
		\\
		\gamma		& 	\gamma
	\end{bmatrix}
	\begin{bmatrix}
		1-\gamma 	& 	\gamma
		\\
		-\gamma		& 	\gamma
	\end{bmatrix}
	\right)
	\otimes
	I_N
	\right)
	\left(
	I_2
	\otimes
	\bigl(
	I_N
	-
	\beta_2
	A\tr\! A
	\bigr)
	\right)
	\left(
	I_2 
	\otimes
	A\tr\! A
	\right)
\end{align*}
where $ \beta = \beta_1 \beta_2 $.
Hence, \eqref{eq:Pcond} is satisfied if
\begin{equation*}
	\begin{bmatrix}
		1 - \gamma 	& 	0
		\\
		0		& 		\gamma
	\end{bmatrix}
	-
	\beta_1
	\begin{bmatrix}
		1-\gamma 	& 	-\gamma
		\\
		\gamma		& 	\gamma
	\end{bmatrix}
	\begin{bmatrix}
		1-\gamma 	& 	\gamma
		\\
		-\gamma		& 	\gamma
	\end{bmatrix}
	\mge 
	0
\end{equation*}
and
\begin{equation*}
	I_N
	-
	\beta_2
	A\tr\! A
	\mge 
	0.
\end{equation*}
These conditions are repsectively satisfied
if 
\begin{equation*}
	\beta_1 \leq  1 / \max\{ 1, \; \gamma/(1-\gamma) \} 
\end{equation*}
and
\begin{equation*}
	\beta_2 \leq  1 / \norm{ A\tr\! A}_2.
\end{equation*}
The FB algorithm requires that $ P $ be $ \beta $-cocoercive with $ \beta > 0 $;
hence, $ \gamma = 1 $ is precluded. 
\end{proof}

If $ \gamma = 0 $ in Proposition \ref{prop:alg}, then the algorithm 
reduces to the classic iterative shrinkage/thresholding algorithm (ISTA) \cite{Daubechies_2004, Fig_2003_TIP}.

The Douglas-Rachford algorithm (Theorem 25.6 in \cite{Bauschke_2011})
may also be used to find a saddle-point of $ F $ in  \eqref{eq:defFxv}.

\section{Numerical Examples}
\label{sec:examples}

\subsection{Denoising using frequency-domain sparsity}

This example illustrates the use of the GMC penalty for denoising \cite{Chen_1998_SIAM}.  
Specifically, we consider the estimation of the discrete-time signal
\begin{equation*}
	g(m) = 2 \cos( 2 \pi f_1 m ) + \sin( 2 \pi f_2 m ),
	\qquad
	m = 0, \dots, M-1
\end{equation*}
of length $ M = 100 $ with frequencies $ f_1 = 0.1 $ and $ f_2 = 0.22 $.
This signal is sparse in the frequency domain, 
so we model the signal as
$ g = A x $ where $ A $ is an over-sampled inverse discrete Fourier transform
and $ x \in \CC^N $ is a sparse vector of Fourier coefficients with $ N \ge M $.
Specifically, 
we define the matrix $ A \in \CC^{M \times N} $
as
\begin{equation}
	A_{m,n} = \bigl(1/{\sqrt{N}} \, \bigr) \exp(\myJ (2\pi/N) mn),
	\qquad
	m = 0,\dots,M-1,
	\
	n = 0,\dots,N-1
\end{equation}
with $ N = 256 $.
The columns of $ A $ form a normalized tight frame, i.e., $ A A\ct = I $
where $ A\ct $ is the complex conjugate transpose of $ A $.
For the denoising experiment, we corrupt the signal with additive white Gaussian noise (AWGN) with
standard deviation $ \sigma = 1.0 $,
as illustrated in Fig.~\ref{fig:example_dft}.

\begin{figure}
	\centering
	\includegraphics{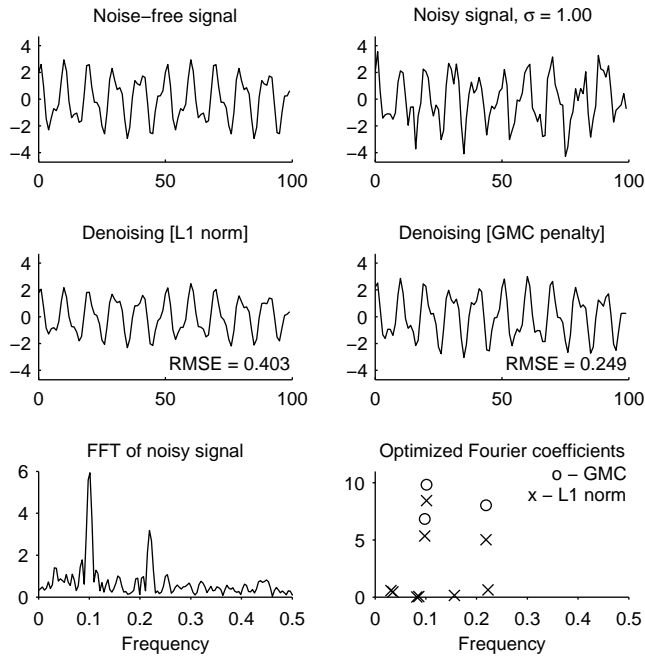}
	\caption{
		Denoising using the $ \ell_1 $ norm and the proposed GMC penalty.
		The plot of optimized coefficients shows only the non-zero values.
	}
	\label{fig:example_dft}
\end{figure}

In addition to the $ \ell_1 $ norm and proposed GMC penalty, 
we use several other methods:
debiasing the $ \ell_1 $ norm solution \cite{Figueiredo_2007_GPSR}, 
iterative p-shrinkage (IPS) \cite{Voronin_2013_ICASSP, Woodworth_2016_InvProb},
and
multivariate sparse regularization (MUSR) \cite{Selesnick_2017_TSP_MUSR}.
Debiasing the $ \ell_1 $ norm solution is a two-step approach 
where the $ \ell_1 $-norm solution is used to estimate the support,
then the identified non-zero values are re-estimated by un-regularized least squares.
The IPS algorithm is a type of iterative thresholding algorithm 
that performed particularly well in a detailed comparison of several algorithms \cite{Selesnick_2016_TSP_BISR}.
MUSR regularization is a precursor of the GMC penalty, 
i.e., a  non-separable non-convex penalty designed to maintain cost function convexity,
but with a simpler functional form. 

In this denoising experiment, we use 20 realizations of the noise. 
Each method calls for a regularization parameter $ \lam $ to be set.
We vary $ \lam $ from 0.5 to 3.5 (with increment 0.25) and evaluate the RMSE
for each method, for each $ \lam $, and for each realization.
For the GMC method we must also specify the matrix $ B $,
which we set using \eqref{eq:setB} with $ \gamma = 0.8 $.
Since $ B\ct \! B $ is not diagonal, the GMC penalty is non-separable. 
The average RMSE as a function of $ \lam $ 
for each method is shown in Fig.~\ref{fig:example_dft_rmse}.

The GMC compares favorably with the other methods, achieving the minimum average RMSE.
The next best-performing method is debiasing of the $ \ell_1 $-norm solution,
which performs almost as well as GMC.
Note that this debiasing method does not minimize an initially prescribed cost function,
in contrast to the other methods. 
The IPS algorithm aims to minimize a (non-convex) cost function.

Figure~\ref{fig:example_dft} shows the $ \ell_1 $-norm and GMC solutions for a particular noise realization.
The solutions shown in this figure were obtained using
the value of $ \lam $ that minimizes the average RMSE
($ \lam = 1.0 $ and $ \lam = 2.0 $, respectively). 
Comparing the $ \ell_1 $  norm and GMC solutions,
we observe:
the GMC solution is more sparse in the frequency domain;
and 
the $ \ell_1 $ norm solution underestimates the coefficient amplitudes.

Neither increasing nor decreasing the regularization parameter~$ \lam $ helps the $ \ell_1 $-norm solution here.
A larger value of $ \lam $ makes the $ \ell_1 $-norm solution sparser, but reduces 
the coefficient amplitudes.
A smaller value of $ \lam $ increases the coefficient amplitudes of the $ \ell_1 $-norm solution,
but makes the solution less sparse and more noisy. 

Note that the purpose of this example is to 
compare the proposed GMC penalty with other sparse regularizers.
We are not advocating it for frequency estimation \emph{per se}.

\begin{figure}[t]
	\centering
	\includegraphics{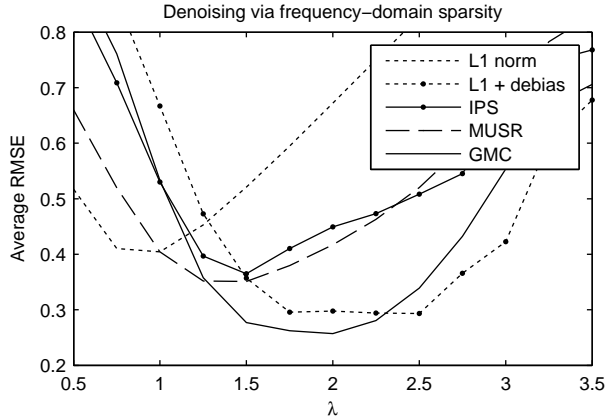}
	\medskip
	\caption{
		Average RMSE for three denoising methods.
	}
	\label{fig:example_dft_rmse}
\end{figure}

\subsection{Denoising using time-frequency sparsity}

This example considers the denoising of a bat echolocation pulse,
shown in Fig.~\ref{fig:example_bat} (sampling period of 7 microseconds).%
\footnote{The bat echolocation pulse data is curtesy of Curtis Condon, Ken White, and Al Feng of the Beckman Center at the University of Illinois. Available online at {http://dsp.rice.edu/software/bat-echolocation-chirp}.}
The bat pulse can be modeled as sparse in the time-frequency domain. 
We use a short-time Fourier transform (STFT) with 75\% overlapping segments
(the transform is four-times overcomplete). 
We implement the STFT as a normalized tight frame, i.e., $ A A\ct = I $.
The bat pulse and its spectrogram are illustrated in Fig.~\ref{fig:example_bat}.
For the denoising experiment, 
we contaminate the pulse with AWGN ($ \sigma = 0.05 $).

\begin{figure}
	\centering
		\includegraphics{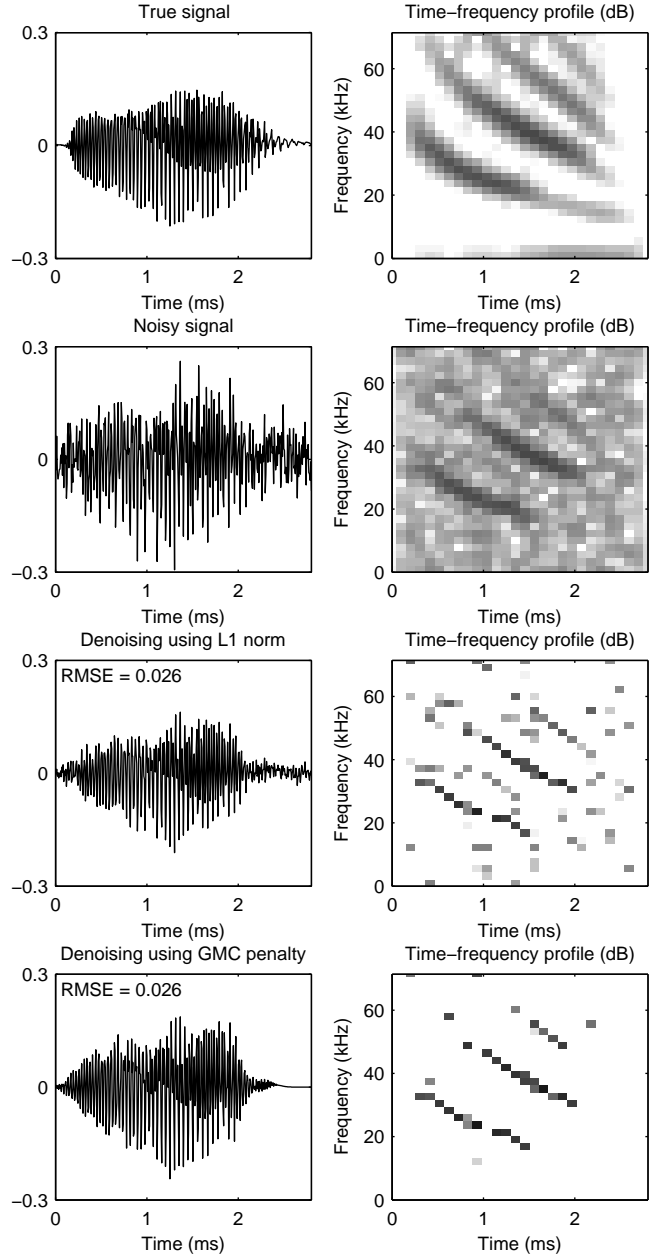}
	\caption{
	        Denoising a bat echolocation pulse using the $ \ell_1 $ norm and GMC penalty. 
	        The GMC penalty results in fewer extraneous noise artifacts
	        in the time-frequency representation.
	}
	\label{fig:example_bat}
\end{figure}

We perform denoising by estimating the STFT coefficients 
by minimizing the cost function $ F $ in \eqref{eq:defFx}
where $ A $ represents the inverse STFT operator.
We set $ \lam $ so as to minimize the root-mean-square error (RMSE).
This leads to the values  $ \lam = 0.030 $ and $ \lam = 0.51 $
for the $ \ell_1 $-norm and GMC penalties, respectively. 
For the GMC penalty, we set $ B $ as in \eqref{eq:setB} with $ \gamma = 0.7 $.
Since $ B\ct \! B $ is not diagonal, the GMC penalty is non-separable. 
We then estimate the bat pulse by
computing the inverse STFT of the optimized coefficients.
With $ \lam $ individually set for each method, 
the resulting RMSE is about the same (0.026).
The optimized STFT coefficients (time-frequency representation) for each solution is shown in Fig.~\ref{fig:example_bat}.
We observe that the GMC solution has substantially fewer
extraneous noise artifacts in the time-frequency representation,
compared to the $ \ell_1 $ norm solution.
(The time-frequency representations in Fig.~\ref{fig:example_bat} 
are shown in decibels with 0 dB being black and -50 dB being white.)

\subsection{Sparsity-assisted signal smoothing}

This example uses the GMC penalty for sparsity-assisted signal smoothing (SASS) \cite{Selesnick_2015_SASS, Selesnick_2017_ICASSP_SASS}.
The SASS method is suitable for the denoising of signals that are smooth for the exception of singularities. 
Here,
we use SASS to denoise the biosensor data illustrated in Fig.~\ref{fig:example_biosensor}(a),
which exhibits jump discontinuities. 
This data was acquired using a whispering gallery mode (WGM) sensor designed to detect nano-particles
with high sensitivity \cite{Dantham_2012, Arnold_2003_OL}.
Nano-particles show up as jump discontinuities in the data.

The SASS technique formulates the denoising problem as a sparse deconvolution problem.
The cost function to be minimized has the form \eqref{eq:defFx}.
The exact cost function, given by equation (42) in Ref. \cite{Selesnick_2017_ICASSP_SASS},
depends on a prescribed low-pass filter and the order of the singularities the signal is assumed to posses. 
For the biosensor data shown in Fig.~\ref{fig:example_biosensor},
the singularities are of order $ K = 1 $ since the first-order derivative of the signal exhibits impulses. 
In this example, we use a low-pass filter
of order $ d = 2 $ and cut-off frequency $ f_c = 0.01 $
(these parameter values designate a low-pass filter as described in \cite{Selesnick_2017_ICASSP_SASS}).
We set $ \lam = 32 $ and, for the GMC penalty, we set $ \gamma = 0.7 $.
Solving the SASS problem using the $ \ell_1 $ norm
and GMC penalty yields the denoised signals shown in Figs.~\ref{fig:example_biosensor}(b) and \ref{fig:example_biosensor}(c), respectively.
The amplitudes of the jump discontinuities are indicated in the figure.

\begin{figure}
	\centering
	\includegraphics{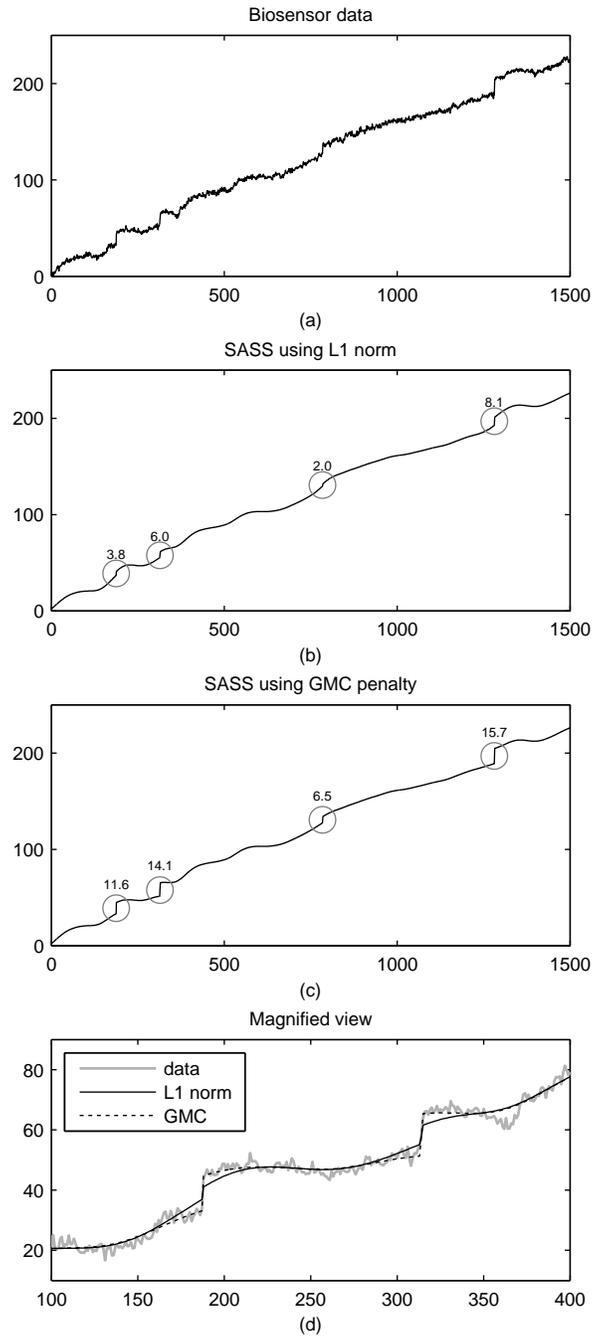}
	\caption{
		Sparsity-assisted signal smoothing (SASS) using
		$ \ell_1 $-norm and GMC regularization,
		as applied to biosensor data.
		The GMC method more accurately estimates jump discontinuities.
	}
	\label{fig:example_biosensor}
\end{figure}

It can be seen, especially in Fig.~\ref{fig:example_biosensor}(d), that the GMC solution estimates the jump discontinuities
more accurately than the $ \ell_1 $ norm solution.
The $ \ell_1 $ norm solution tends to underestimate the amplitudes of the jump discontinuities. 
To reduce this tendency, 
a smaller value of $ \lam $ could be used,
but that tends to produce false discontinuities (false detections).

\section{Conclusion}

In regards to  the sparse-regularized linear least squares problem,
this work bridges the convex (i.e., $ \ell_1 $ norm) and the non-convex (e.g., $ \ell_p $ norm with $ p < 1 $)
approaches, which are usually mutually exclusive and incompatible.
Specifically, 
this work formulates the sparse-regularized linear least squares problem
using a non-convex generalization of the $ \ell_1 $ norm 
that
preserves the convexity of the cost function to be minimized. 
The proposed method leads to optimization problems with no extraneous suboptimal local minima
and allows the leveraging of globally convergent, computationally efficient, scalable convex optimization algorithms. 
The advantage compared to $ \ell_1 $ norm regularization is
\ia\
more accurate estimation of high-amplitude components of sparse solutions
or
\ib\
a higher level of sparsity in a sparse approximation problem.
The sparse regularizer is expressed as the $ \ell_1 $ norm minus a 
smooth convex function defined via infimal convolution.
In the scalar case, the method reduces to firm thresholding
(a generalization of soft thresholding). 

Several extensions of this method are of interest.
For example, the idea may admit extension to more general convex regularizers 
such as
total variation \cite{ROF_1992}, 
nuclear norm \cite{Candes_2010_ProcIEEE},
mixed norms \cite{Kowalski_2009_SIVP},
composite regularizers \cite{Ahmad_2015_TCI, Afonso_2010_ICIP},
co-sparse regularization \cite{Nam_2011_acha},
and more generally, 
atomic norms \cite{Chandrasekaran_2012_FCM},
and
partly smooth regularizers \cite{Vaiter_2016_degrees}.
Another extension of interest is to problems where the
data fidelity term is not quadratic
(e.g., Poisson denoising \cite{Dupe_2009_TIP}).

\bibliographystyle{plain}

\end{document}